\newcommand*\bigcdot{\mathpalette\bigcdot@{.5}}
\newcommand*\bigcdot@[2]{\mathbin{\vcenter{\hbox{\scalebox{#2}{$\m@th#1\bullet$}}}}}
\newcommand{\Q}{\mathbb{Q}}
\newcommand{\so}{\qquad\text{ so }\qquad}
\newcommand{\andd}{\qquad\text{ and }\qquad}
\newcommand{\orr}{\qquad\text{ or }\qquad}
\newcommand{\svw}[1]{\textcolor{black}{#1}}
\newcommand{\svwj}[1]{\textcolor{black}{#1}}
\numberwithin{equation}{section}
\theoremstyle{plain}
\newtheorem*{theorem*}{Theorem}
\newtheorem*{lemma*}{Lemma}
\newtheorem{theorem}[equation]{Theorem}
\newtheorem{lemma}[equation]{Lemma}
\newtheorem{proposition}[equation]{Proposition}
\newtheorem{corollary}[equation]{Corollary}
\newtheorem{conjecture}[equation]{Conjecture}
\theoremstyle{definition}
\newtheorem{definition}[equation]{Definition}
\newtheorem*{definition*}{Definition}
\newtheorem{example}[equation]{Example}
\newtheorem{notation}[equation]{Notation}
\newtheorem*{notation*}{Notation}
\newtheorem{remark}[equation]{Remark}
\newtheorem*{remark*}{Remark}
\theoremstyle{remark}
\numberwithin{equation}{section}
\begin{document}

\title[Schur-positivity for
generalized nets]{Schur-positivity for
generalized nets}

%    Information for first author
\author{Ethan Shelburne}
\address{\svw{
 Department of Mathematics,
 University of British Columbia,
 Vancouver BC V6T 1Z2, Canada}}
\email{\svw{emshelburne@math.ubc.ca}}
%    Address of record for the research reported here
% \address{Department of Mathematics, Louisiana State University, Baton
% Rouge, Louisiana 70803}
% %    Current address
% \curraddr{Department of Mathematics and Statistics,
% Case Western Reserve University, Cleveland, Ohio 43403}
% \email{xyz@math.university.edu}
% %    \thanks will become a 1st page footnote.
% \thanks{The first author was supported in part by NSF Grant \#000000.}

%    Information for second author
\author{{Stephanie van Willigenburg}}
\address{
 \svw{Department of Mathematics,
 University of British Columbia,
 Vancouver BC V6T 1Z2, Canada}}
\email{\svw{steph@math.ubc.ca}}

\thanks{\svw{{Both} authors were supported in part by the \svw{Natural} Sciences and Engineering Research Council of Canada.}}
\subjclass[2020]{\svw{05C15, 05C25, 05E05, 16T30}}
\keywords{\svw{chromatic symmetric function, claw-free, generalized net, Schur function, Schur-positivity}}
% \address{Mathematical Research Section, School of Mathematical Sciences,
% Australian National University, Canberra ACT 2601, Australia}
% \email{two@maths.univ.edu.au}
% \thanks{Support information for the second author.}

% %    General info
% \subjclass[2020]{Primary 54C40, 14E20; Secondary 46E25, 20C20}

% \date{January 1, 2001 and, in revised form, June 22, 2001.}

% % \dedicatory{This paper is dedicated to our advisors.}

% \keywords{Differential geometry, algebraic geometry}

\begin{abstract}
A {graph} is Schur-positive if its chromatic symmetric function expands nonnegatively
in the Schur basis. All claw-free graphs are conjectured to be Schur-positive. We
introduce a combinatorial object corresponding to a graph G, called a special rim hook
G-tabloid, which is a variation on the special rim hook tabloid. These objects can be
employed to compute any Schur coefficient of the chromatic symmetric function of a
graph.  We construct sign-reversing maps on these special rim
hook G-tabloids to obtain a recurrence relation for the Schur coefficients of a family of claw-free graphs called generalized nets, then we prove the entire family is Schur-positive. We subsequently determine an analogous recurrence relation for another, similar family of claw-free graphs. Thus, we demonstrate a new method for proving Schur-positivity of chromatic symmetric functions, which has the potential to be applied to make further progress toward the
aforementioned conjecture.

% Special rim hook tabloids have previously been used to prove the non-Schur-positivity of some graphs.
\end{abstract}

\maketitle

% \section*{This is an unnumbered first-level section head}
% This is an example of an unnumbered first-level heading.

%% The correct journal style for \specialsection is all uppercase; a known bug
%% in amsart.cls prevents this, so input must be uppercase until it is fixed.
%\specialsection*{This is a Special Section Head}
% \specialsection*{THIS IS A SPECIAL SECTION HEAD}
% This is an example of a special section head%
%%%%%%%%%%%%%%%%%%%%%%%%%%%%%%%%%%%%%%%%%%%%%%%%%%%%%%%%%%%%%%%%%%%%%%%%
% \footnote{Here is an example of a footnote. Notice that this footnote
% text is running on so that it can stand as an example of how a footnote
% with separate paragraphs should be written.
% \par
% And here is the beginning of the second paragraph.}%
%%%%%%%%%%%%%%%%%%%%%%%%%%%%%%%%%%%%%%%%%%%%%%%%%%%%%%%%%%%%%%%%%%%%%%%%

\section{Introduction}
\label{sect:intro}

In 1995, Richard Stanley introduced the chromatic symmetric function in \cite{stan_95}, extending the idea of counting proper colorings of graphs to the algebra of symmetric functions. %The introduction of chromatic symmetric functions propelled a cascade of research, as well as studies of many analogous types of power series. \svw{Not really.}
Symmetric functions which expand nonnegatively in the $e$-basis or Schur basis often exhibit special representation theoretical or algebraic properties, such as intricate combinatorial interpretations of their coefficients. For example, all symmetric functions which expand nonnegatively in the Schur basis arise as the Frobenius image of some representation of the symmetric group, as discussed in \cite{sym_group}. A natural goal stated in \svwj{\cite[p. 186]{stan_95}} is to classify exactly which graphs \svw{have a chromatic symmetric function that is} Schur-positive or $e$-positive. In particular, the following three major open conjectures emerged from Stanley's studies of chromatic symmetric functions.

\begin{enumerate}

\item (The Stanley-Stembridge Conjecture, \cite{stan_93}): All claw-free incomparability graphs are $e$-positive.

\item (The Nonisomorphic Tree Conjecture, \cite{stan_95}): No two nonisomorphic trees have the same chromatic symmetric function.

\item (The Claw-free Conjecture, \cite{stan_98}): All claw-free graphs are Schur-positive.

\end{enumerate}

Subfamilies of graphs have been shown to satisfy the Stanley-Stembridge Conjecture in \svw{\cite{centered_at_vertex, Dahl18, GebSag01},} among others, \svwj{and a proof has recently been announced \cite{hikita}.} The Nonisomorphic Tree Conjecture has been computationally confirmed on trees of up to 29 vertices in \cite{trees29}. Other results toward this conjecture can be found in \svw{\cite{foley22, mar_mor, OrSc14},} for example. Vesselin Gasharov made significant headway toward the Claw-free Conjecture by proving that all claw-free incomparability graphs are Schur-positive in \cite{gash_96}. Moreover, it was shown in \cite{Kalis14} that all coefficients of chromatic symmetric functions corresponding to partitions of ``hook'' shapes are nonnegative. Most recently, David Wang and Monica Wang introduced a combinatorial formula which yields the Schur coefficients of chromatic symmetric functions, and used it to prove that certain graphs are not Schur-positive in \cite{WW2020}. 
 
 % This formula is in terms of signed combinatorial objects known as special rim hook tabloids. 

Due to Gasharov's result, the task of proving the Claw-free Conjecture has been reduced to showing the Schur-positivity of claw-free graphs which are not incomparability graphs. We focus on a particular family of graphs satisfying these properties known as generalized nets, which consist of complete graphs with degree one vertices appended. We prove that all such graphs are Schur-positive, thus making progress toward the Claw-free Conjecture. We then extend our methods to prove a related result for another family of claw-free, non-incomparability graphs, which are generalized nets with one vertex added.

% We then prove a related result for another family of claw-free graphs, which is a variation on generalized nets.

% related results for two families of graphs which are variations on generalized nets.

In order to achieve these results, we introduce a new version of the combinatorial objects known as special rim hook tabloids. We then reinterpret the formula from \cite{WW2020} in terms of these objects, and construct sign-reversing maps on them to obtain recurrence relations for the Schur coefficients of the chromatic symmetric functions of the \svw{two} families of graphs we consider. Likewise, we use similar methods to find an explicit formula for certain Schur coefficients of generalized nets. Accordingly, we develop new methods that can potentially be applied to other families of graphs in order to advance the pursuit of a proof of the Claw-free Conjecture.

Section \ref{sect:prelims} covers the necessary background.
Section \ref{sect:SRH_G_TABS} introduces special rim hook G-tabloids and elucidates their role in computing Schur coefficients for chromatic symmetric functions. Section \ref{sect:gen_nets} presents a proof of the Schur-positivity of generalized nets. Finally, Section \ref{sect:gen_spiders} establishes a recurrence relation on the Schur coefficients corresponding to the modified version of generalized nets on which we focus.

% Finally, section 5 is the conclusion, in which we summarize the significance of our results and outline avenues for future research in this area.

\section*{Acknowledgments} \svwj{The authors would like to thank the referee for their careful reading and thoughtful comments.}

\section{Preliminaries}
\label{sect:prelims}
 A \textit{partition} of $n\ge 0$ is a sequence of weakly decreasing positive integers $\lambda=(\lambda_1,\dots,\lambda_k)$ whose sum is $n$.
 The \textit{length} of $\lambda$ is given by $\ell(\lambda)=k$ and the \textit{size} of $\lambda$ is given by $|\lambda|=n$. In the case where $n=0$, we say $\lambda$ is the \textit{empty partition}. We use exponents to denote repeated integers in a partition. For example, $\lambda=(3,3,2,2,2,1)=(3^2,2^3,1)$. The \textit{diagram} of a partition $\lambda$ of $n$ is an array of $n$ boxes (called \textit{cells}) in left-justified rows such that row $i$ contains $\lambda_i$ boxes, where the rows are indexed from top to bottom and the columns are indexed from left to right. Below, we depict the diagram of the partition $(5,4,3,3,2)$.

 \begin{figure}[H]
 \begin{tikzpicture}
 \node[scale=.6] at (0,0) (a) {\ydiagram{5,4,3,3,2}};
  \end{tikzpicture}
\label{fig:young_diagram_example}
 \end{figure}

 A \textit{composition} of $n\ge 0$ is a sequence of positive integers $\kappa=[\kappa_1,\dots,\kappa_j]$ whose sum is $n$. The \textit{length} of $\kappa$ is given by $\ell(\kappa)=j$ and the \textit{size} of $\kappa$ is given by $|\kappa|=n$. In the case where $n=0$, we say $\kappa$ is the \textit{empty composition}.
 Given a composition $\kappa$, we use $\Lambda(\kappa)$ to denote the partition obtained by arranging the integers in $\kappa$ in weakly decreasing order. Again, we sometimes use exponents to denote repeated integers in a composition.

We now let $\mathbf{x}=\{x_1,x_2,x_3,\dots\}$ be a countably infinite set of commuting variables and consider the \textit{algebra of formal power series} in variables $\mathbf{x}$ over the rational numbers, which we denote by $\Q[[\mathbf{x}]]$. We say $f(\mathbf{x})$ is symmetric if it is invariant under any permutation of the variables $\mathbf{x}$. The subspace
\[
\text{Sym}(\mathbf{x})=\{f\in \Q[[\mathbf{x}]]\,\mid\,f\text{ is symmetric}\}
\]
has the structure of an algebra and is called the\textit{ algebra of symmetric functions}.

%%%%MDO I NEED TO DEFINE POSETS
%  A \textit{partially ordered set}, or simply \textit{poset}, is a set P equipped with a binary relation $\le$ that is
% reflexive, antisymmetric and transitive. For some poset $P$, the \textit{incomparability graph} of $P$ is the graph $\text{inc}(P)$ on vertices associated to the elements of $P$ such that two vertices are adjacent if and only if their corresponding elements of $P$ are incomparable. 

We focus on the classical Schur basis for Sym$(\mathbf{x})$, which we introduce using semistandard Young tableau.     A \textit{semistandard Young tableau} (SSYT) of \textit{shape} $\lambda$ is a filling $Q$ of the cells of the diagram $\lambda$ with positive integers such that rows weakly increase from left to right and columns strictly increase from top to bottom.  Given a semistandard Young tableau $Q$, we define the \textit{weight} of $Q$ to be
    \[
    \text{wt}(Q)=x_1^{\#1s}x_2^{\#2s}x_3^{\#3s}\cdots.
    \]

\begin{example}
 Below we portray several examples of SSYTs of shape (4,2,1). 
 
 \begin{figure}[H]
\begin{align*}
\begin{ytableau}
       1 &  2 & 3  & 4 \\
       5  & 6  \\
        7
\end{ytableau}
\qquad
\begin{ytableau}
       1 &  1 &  2 & 4 \\
       2  & 3  \\
        4
\end{ytableau}
\qquad
\begin{ytableau}
       2 &  2 & 2  & 7 \\
       3  & 3  \\
        5
\end{ytableau}
\qquad
\begin{ytableau}
       1 &  2 & 5  & 5 \\
       5  & 5  \\
        6
\end{ytableau}
\end{align*}

\label{fig:SSYTs}
\end{figure}
 
 From left to right, these SSYTs have weights
 \begin{align*}
 \label{eq:SSYTweights}
 x_1x_2x_3x_4x_5x_6x_7,\qquad x_1^2x_2^2x_3x_4^2,\qquad x_2^3x_3^2x_5x_7,\andd x_1x_2x_5^4x_6.
 \end{align*}

\end{example}

Given some partition $\lambda$, the \textit{Schur function} associated to $\lambda$ is
\[
s_{\lambda}=\sum_{Q}\text{wt}(Q)
\]
where the sum spans over all semistandard Young tableaux $Q$ of shape $\lambda$. All Schur functions are symmetric and
\[
\mathbf{s}=\{s_{\lambda}\,\mid\, \lambda\text{ is a partition}\}
\]
is a basis for $\text{Sym}(\mathbf{x})$. We will use the notation
\[
[s_{\lambda}]f(\mathbf{x})= \text{the coefficient of $s_{\lambda}$ in the expansion of $f(\mathbf{x})$ in $\mathbf{s}$}. 
\]
We say a symmetric function $f(\mathbf{x})\in \text{Sym}(\mathbf{x})$ is \textit{Schur-positive} if $[s_{\lambda}]f(\mathbf{x})\ge 0$ for all partitions $\lambda$. Some of the reasons for studying Schur-positivity, as well as some of the major results in this area, are discussed in \cite{lam_schur_pos}. %Various methods for proving a symmetric function is Schur-positive are discussed in \cite{sym_cat}.\svw{As we discussed, ok for thesis but not for paper.}

We now define a family of functions belonging to Sym$(\mathbf{x})$ known as chromatic symmetric functions, which were introduced by Stanley in 1995 \cite{stan_95}. Given a graph $G$ with vertices $V(G)$, a \textit{proper coloring} of $G$ in $q$ colors is a map
\[
\mathcal{C}:V(G)\to\{1,2,3,\dots, q\}
\]
such that, if $u$ and $v$ are adjacent, then $\mathcal{C}(u)\ne \mathcal{C}(v)$. For a proper coloring $\mathcal{C}$ of $G$, we define
    \[
\mathbf{x}^{\mathcal{C}}=x_{\mathcal{C}(v_1)}x_{\mathcal{C}(v_2)}\cdots x_{\mathcal{C}(v_n)}.
    \]
    The \textit{chromatic symmetric function} of a graph $G$ is the formal power series
\[
X_G(\mathbf{x})=\sum_{\mathcal{C}}\mathbf{x}^{\mathcal{C}},
\]
where the sum ranges over all proper colorings $\mathcal{C}$ of $G$.
    The chromatic symmetric function is symmetric since permuting the variables of the function amounts to permuting the colors in each proper coloring. We say that a graph $G$ is \textit{Schur-positive} if $X_G(\mathbf{x})$ is Schur-positive. 

 In order to discuss an important result from \cite{WW2020}, we now introduce a combinatorial object known as a special rim hook tabloid.

 A \textit{rim hook} of \textit{length} $k$ is a sequence of $k$ connected cells in a diagram, each of which lie on the southeast boundary, and whose removal results in a smaller diagram. For any rim hook, we call each path between consecutive cells a \textit{step}. If the cells are in different rows, we use the term \textit{north step} ($N$-step), whereas if the cells are in different columns, we use the term $\textit{east step}$ ($E$-step).

 Let $\kappa=[\kappa_1,\dots,\kappa_{j}]$ be a composition and $\lambda=(\lambda_1,\dots,\lambda_k)$ be a partition. A \textit{rim hook tabloid} of \svw{\emph{shape}} $\lambda$ and \svw{\emph{content}} $\kappa$ is a filling of the cells of the diagram of $\lambda$ with $j$ sequences of connected cells $r_i$ such that $r_1$ is a rim hook of length $\kappa_1$ and, for all $1\le i \le j-1$, if $r_1,\dots, r_i$ are removed from $\lambda$ to form $\tilde{\lambda}$, $r_{i+1}$ is a rim hook of $\tilde{\lambda}$ of length $\kappa_{i+1}$.  A \textit{special rim hook tabloid} (SRH tabloid) is a rim hook tabloid such that every rim hook intersects the first column. We define the \textit{sign} of an SRH tabloid to be
 \[
 \mathrm{sgn}(T)=(-1)^{\text{\# north steps in diagram}}.
 \]
 Moreover, we use the notation $\mathcal{T}_{\lambda}$ to denote the set of all special rim hook tabloids of shape $\lambda$. Given an SRH tabloid $T$, we denote by $\kappa_T$ the \textit{content} of $T$, which is the composition given by the rim hook lengths read from \svw{\emph{the bottom to the top}} of the diagram.

 \begin{example}
Below, we portray all the possible SRH tabloids of shape $\lambda=(4,2,2)$, that is, all elements of the set $\mathcal{T}_{(4,2,2)}$. \svw{Their contents as we go from left to right, and top to bottom, are $(2,2,4)$, $(2,5,1)$, $(3,1,4)$, $(3,5)$, $(6,1,1)$, $(6,2)$.}
 \begin{figure}[H]
 \centering
 \begin{tikzpicture}
 %first diagram
 \node at (1,0) {\ydiagram{4,2,2}};
 \node at (.05,.65) [circle, fill = black, scale = .5, draw] (11){};
  \node at (.65,.65) [circle, fill = black, scale = .5, draw] (12){};
  \node at (1.3,.65) [circle, fill = black, scale = .5, draw] (13){};
\node at (1.95,.65) [circle, fill = black, scale = .5, draw] (14){};
\node at (.05,0) [circle, fill = black, scale = .5, draw] (21){};
\node at (.65,0) [circle, fill = black, scale = .5, draw] (22){};
\node at (.05,-.65) [circle, fill = black, scale = .5, draw] (31){};
\node at (.65,-.65) [circle, fill = black, scale = .5, draw] (32){};
\draw (11)--(12)--(13)--(14);
\draw (21)--(22);
\draw (31)--(32);
%
%second diagram
\node at (4,0) {\ydiagram{4,2,2}};
 \node at (3.05,.65) [circle, fill = black, scale = .5, draw] (11){};
  \node at (3.65,.65) [circle, fill = black, scale = .5, draw] (12){};
  \node at (4.3,.65) [circle, fill = black, scale = .5, draw] (13){};
\node at (4.95,.65) [circle, fill = black, scale = .5, draw] (14){};
\node at (3.05,0) [circle, fill = black, scale = .5, draw] (21){};
\node at (3.65,0) [circle, fill = black, scale = .5, draw] (22){};
\node at (3.05,-.65) [circle, fill = black, scale = .5, draw] (31){};
\node at (3.65,-.65) [circle, fill = black, scale = .5, draw] (32){};
\draw (21)--(22)--(12)--(13)--(14);
\draw (31)--(32);
%
%third diagram
%
\node at (7,0) {\ydiagram{4,2,2}};
 \node at (6.05,.65) [circle, fill = black, scale = .5, draw] (11){};
  \node at (6.65,.65) [circle, fill = black, scale = .5, draw] (12){};
  \node at (7.3,.65) [circle, fill = black, scale = .5, draw] (13){};
\node at (7.95,.65) [circle, fill = black, scale = .5, draw] (14){};
\node at (6.05,0) [circle, fill = black, scale = .5, draw] (21){};
\node at (6.65,0) [circle, fill = black, scale = .5, draw] (22){};
\node at (6.05,-.65) [circle, fill = black, scale = .5, draw] (31){};
\node at (6.65,-.65) [circle, fill = black, scale = .5, draw] (32){};
\draw (11)--(12)--(13)--(14);
\draw (31)--(32)--(22);
%
%fourth diag
%
\node at (10,0) {\ydiagram{4,2,2}};
 \node at (9.05,.65) [circle, fill = black, scale = .5, draw] (11){};
  \node at (9.65,.65) [circle, fill = black, scale = .5, draw] (12){};
  \node at (10.3,.65) [circle, fill = black, scale = .5, draw] (13){};
\node at (10.95,.65) [circle, fill = black, scale = .5, draw] (14){};
\node at (9.05,0) [circle, fill = black, scale = .5, draw] (21){};
\node at (9.65,0) [circle, fill = black, scale = .5, draw] (22){};
\node at (9.05,-.65) [circle, fill = black, scale = .5, draw] (31){};
\node at (9.65,-.65) [circle, fill = black, scale = .5, draw] (32){};
\draw (21)--(11)--(12)--(13)--(14);
\draw (31)--(32)--(22);
%
%fifth diag
%
\node at (13,0) {\ydiagram{4,2,2}};
 \node at (12.05,.65) [circle, fill = black, scale = .5, draw] (11){};
  \node at (12.65,.65) [circle, fill = black, scale = .5, draw] (12){};
  \node at (13.3,.65) [circle, fill = black, scale = .5, draw] (13){};
\node at (13.95,.65) [circle, fill = black, scale = .5, draw] (14){};
\node at (12.05,0) [circle, fill = black, scale = .5, draw] (21){};
\node at (12.65,0) [circle, fill = black, scale = .5, draw] (22){};
\node at (12.05,-.65) [circle, fill = black, scale = .5, draw] (31){};
\node at (12.65,-.65) [circle, fill = black, scale = .5, draw] (32){};
\draw (31)--(32)--(22)--(12)--(13)--(14);
%
%
%sixth diagram
%
\node at (7,-3) {\ydiagram{4,2,2}};
 \node at (6.05,-2.35) [circle, fill = black, scale = .5, draw] (11){};
  \node at (6.65,-2.35) [circle, fill = black, scale = .5, draw] (12){};
  \node at (7.3,-2.35) [circle, fill = black, scale = .5, draw] (13){};
\node at (7.95,-2.35) [circle, fill = black, scale = .5, draw] (14){};
\node at (6.05,-3) [circle, fill = black, scale = .5, draw] (21){};
\node at (6.65,-3) [circle, fill = black, scale = .5, draw] (22){};
\node at (6.05,-3.65) [circle, fill = black, scale = .5, draw] (31){};
\node at (6.65,-3.65) [circle, fill = black, scale = .5, draw] (32){};
\draw (11)--(21);
\draw (31)--(32)--(22)--(12)--(13)--(14);
 \end{tikzpicture}

 \end{figure}
Counting the parity of $N$-steps, we find the second, third, and sixth tabloids have negative sign whereas the rest of the tabloids have positive sign.

 \end{example}

  Now, let $\lambda$ be a partition and $G$ be a graph. \svw{Recall that if $S$ is a subset of $V(G)$, we say $S$ is a \emph{stable set} if all pairs of vertices in $S$ are
nonadjacent.} A \textit{semi-ordered stable partition} of $G$ is a partition of $V(G)$ into $k$ stable sets (called \textit{parts}) having cardinalities $\lambda_1,\dots,\lambda_k$ such that parts of the same cardinality are ordered.  Considering SRH tabloids and semi-ordered stable partitions allows us to state the following result from \cite{WW2020}.
 
 \begin{theorem}\cite{WW2020}
 \label{thm:WW}
 For any graph $G$ and any partition $\lambda$ of $|V(G)|$, we have
 \begin{align*}
     [s_{\lambda}]X_G=\sum_{T\in \mathcal{T}_{\lambda}}\mathrm{sgn}(T)so_G(T),
 \end{align*}
 where $so_G(T)$ denotes the number of semi-ordered stable partitions of $G$ of type $\Lambda(\kappa_T)$.
 \end{theorem}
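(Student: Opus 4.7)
The plan is to combine two standard identities: the monomial expansion of $X_G$ obtained from stable partitions, and the monomial-to-Schur change of basis given by the E\u{g}eci\u{o}\u{g}lu--Remmel interpretation of the inverse Kostka matrix in terms of signed special rim hook tabloids.

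First I would derive the monomial expansion. Each proper coloring $\mathcal{C}$ of $G$ determines the unordered stable partition $\pi_{\mathcal{C}}$ of $V(G)$ given by its color classes. Conversely, the colorings inducing a fixed unordered stable partition $\pi$ with blocks $B_1,\dots,B_k$ of sizes forming the partition $\mu$ are parametrized by injections from $\{B_1,\dots,B_k\}$ into the positive integers, and the monomials they contribute sum to the augmented monomial symmetric function $\tilde{m}_\mu(\mathbf{x}) = \prod_i m_i(\mu)!\, m_\mu(\mathbf{x})$. Since each unordered stable partition of type $\mu$ corresponds to exactly $\prod_i m_i(\mu)!$ semi-ordered stable partitions, collecting terms yields
\[
X_G(\mathbf{x}) \;=\; \sum_\mu so_G(\mu)\, m_\mu(\mathbf{x}).
\]

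Second, I would invoke the E\u{g}eci\u{o}\u{g}lu--Remmel formula, which asserts that for partitions $\lambda$ and $\mu$ of the same size,
\[
[s_\lambda]\, m_\mu \;=\; \sum_{\substack{T \in \mathcal{T}_\lambda \\ \Lambda(\kappa_T) = \mu}} \mathrm{sgn}(T).
\]
Extracting $[s_\lambda]$ from the monomial expansion of $X_G$ above and substituting this identity gives
\[
[s_\lambda]\, X_G \;=\; \sum_\mu so_G(\mu) \sum_{\substack{T \in \mathcal{T}_\lambda \\ \Lambda(\kappa_T) = \mu}} \mathrm{sgn}(T) \;=\; \sum_{T \in \mathcal{T}_\lambda} \mathrm{sgn}(T)\, so_G(\Lambda(\kappa_T)),
\]
which is the claimed identity, since $so_G(T) = so_G(\Lambda(\kappa_T))$ by definition.

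The main obstacle is the factorial bookkeeping in the first step: one must check carefully that the factor $\prod_i m_i(\mu)!$ appearing in $\tilde{m}_\mu / m_\mu$ is exactly absorbed when one passes from the unordered stable partitions that arise naturally in grouping proper colorings to the semi-ordered stable partitions counted by $so_G(\mu)$. The second step is primarily a citation; alternatively, one could route through Stanley's power sum expansion $X_G = \sum_{S \subseteq E(G)} (-1)^{|S|} p_{\lambda(S)}$ combined with the Murnaghan--Nakayama rule and reorganize the resulting signed sum to match the statement, but the monomial-based approach above keeps the link between $so_G$ and the Schur coefficients most transparent.
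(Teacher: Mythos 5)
Your proof is correct. Note that the paper does not prove this statement at all---it is quoted from \cite{WW2020}---but your route (the stable-partition expansion $X_G=\sum_{\mu}so_G(\mu)\,m_{\mu}$, which is Stanley's augmented-monomial formula with the factor $\prod_i m_i(\mu)!$ absorbed exactly by passing from unordered to semi-ordered stable partitions, followed by the E\u{g}ecio\u{g}lu--Remmel special rim hook interpretation of the inverse Kostka matrix) is precisely the argument given in that reference, so you have reproduced the intended proof; the sign conventions also agree, since the number of north steps of a rim hook is one less than the number of rows it spans.
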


 In \cite{WW2020}, the formula is utilized to prove the non-Schur-positivity of subclasses of complete tripartite graphs, squid graphs, and pineapple graphs.

To conclude this section, we state \svwj{the} Claw-free Conjecture, which motivates our main results in Section \ref{sect:gen_nets}. A graph $G$ is \textit{$H$-free} if it does not contain a copy of $H$ as an induced subgraph. The \textit{claw} $K_{13}$, as depicted below, is the smallest non-Schur-positive graph on four vertices.

\begin{figure}[H]
    \centering
    \begin{tikzpicture}[scale=.4]

\node[circle,draw,scale=.75,fill=black,scale = .5] at (0,1) (1) {};
\node[circle,draw,scale=.75,fill=black,scale = .5] at (0,0) (2) {};
\node[circle,draw,scale=.75,fill=black,scale = .5] at (0,-1) (3) {};
\node[circle,draw,scale=.75,fill=black,scale = .5] at (2,0) (4) {};
\draw (1)--(4)--(3);
\draw (2)--(4);
    \end{tikzpicture}
\end{figure}
 
\begin{conjecture}[\cite{stan_98}]
\label{conj:claw_free}
 All claw-free graphs are Schur-positive. 
\end{conjecture}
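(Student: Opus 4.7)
The plan is to combine the existing structure theory for claw-free graphs with the combinatorial framework of special rim hook G-tabloids introduced later in this paper. By Gasharov's theorem \cite{gash_96}, every claw-free incomparability graph is Schur-positive, so one may restrict attention to claw-free graphs that are not incomparability graphs. For any such graph $G$ and partition $\lambda$ of $|V(G)|$, the Wang--Wang formula (Theorem~\ref{thm:WW}) expresses $[s_{\lambda}]X_G$ as a signed sum over $\mathcal{T}_\lambda$ weighted by counts of semi-ordered stable partitions, so the task of proving $[s_\lambda]X_G\ge 0$ reduces to producing, for every $\lambda$, a sign-reversing involution on the underlying objects whose fixed points are positively signed.

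The strategy is to develop such involutions family-by-family, modeled on those constructed in Sections~\ref{sect:gen_nets} and \ref{sect:gen_spiders}. The Chudnovsky--Seymour structure theorem decomposes every connected claw-free graph into a short list of basic classes---line graphs of triangle-free graphs, circular interval graphs, antiprismatic graphs, and certain well-controlled trigraphs---together with a handful of composition operations, so the first reduction is to prove Schur-positivity for each basic class and then to show that the relevant compositions preserve it. For each basic class one aims to design a local swap rule on pairs $(T,\pi)$, where $T$ is an SRH tabloid and $\pi$ is a compatible semi-ordered stable partition, pairing each negatively signed configuration with a positively signed one of the same content. Because pendant vertices and complete subgraphs interact transparently with stable sets, generalized nets and their close relatives can be handled directly, and one hopes to bootstrap from these cases.

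The hard part is constructing global sign-reversing matchings that respect the interaction between the rim hook combinatorics and the graph structure. Generalized nets succeed because appended degree-one vertices constrain stable sets in a predictable way; circular interval graphs and antiprismatic graphs have much richer stable set structure, and no single local swap rule will do. A more realistic variant of the plan uses strong induction on $|V(G)|$, with the inductive step carried by a vertex-deletion recurrence analogous to the one obtained in Section~\ref{sect:gen_spiders} and reducing to simpler claw-free graphs via a carefully chosen ``simplicial-like'' vertex. One expects the involution's fixed points to admit a uniform combinatorial description, ideally as an analogue of Gasharov's $P$-tableaux, which would simultaneously recover \cite{gash_96} as the incomparability case.

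The principal obstacle is twofold: first, guaranteeing that the vertex-deletion recurrence stays within the claw-free universe (deleting a vertex from a claw-free graph is fine, but the resulting cancellation of SRH G-tabloids must match up content by content); and second, handling the composition operations in the structure theorem, where Schur-positivity is not obviously preserved and where a careful analysis of how stable partitions behave under 2-joins and hex-expansions will be required.
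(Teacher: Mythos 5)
This statement is a conjecture, not a theorem: the paper offers no proof of it, and it remains open. What the paper actually proves is Schur-positivity for the single family of generalized nets $GN_{n,m}$ (Theorem~\ref{thm:gen_nets_are_s_pos}), via Corollary~\ref{cor:better_s_form}, the cancellation algorithm of Proposition~\ref{prop:rec_algorithm}, the recurrence of Proposition~\ref{prop:rec_for_most_coef}, and the explicit evaluation in Proposition~\ref{prop:regular_tailless_form}; it then derives only a recurrence (Proposition~\ref{prop:spi_rec}) for the spiders $GS_{n,(2,1^{m-1})}$ and explicitly leaves even that family's Schur-positivity contingent on three unverified coefficients. So your proposal cannot be ``compared against the paper's proof'' --- there is none --- and it should not be presented as a proof of the conjecture.

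As a research program your outline has a genuine and decisive gap, which you yourself half-acknowledge: no sign-reversing involution is actually constructed for any basic class beyond the pendant-decorated complete graphs, and nothing is established about whether Schur-positivity survives the composition operations (2-joins, hex-joins, etc.) in the Chudnovsky--Seymour structure theorem. The cancellations in Proposition~\ref{prop:rec_algorithm} lean heavily on two special features of generalized nets --- the body is a clique, so distinct body vertices are forced into distinct rim hooks, and each pendant has a unique anchor --- and these features fail badly for circular interval and antiprismatic graphs, whose stable sets can meet the ``body'' in many vertices. Moreover, a vertex-deletion recurrence does not automatically stay within a class for which positivity is known: deleting a vertex of $GN_{n,m}$ already produces a disjoint union $GN_{n-1,m-1}\cup P_1$, and the paper must track such side effects carefully; for general claw-free graphs there is no candidate ``simplicial-like'' vertex whose deletion yields a controlled signed recurrence. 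Until you exhibit concrete involutions whose fixed points are positively signed for at least one basic class beyond incomparability graphs and nets, the argument is a plan, not a proof.
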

A large class of claw-free graphs are known to be Schur-positive, due to the following theorem.

\begin{theorem}[\cite{gash_96}]
 \label{thm:claw_free_incs}
 All claw-free incomparability graphs are Schur-positive. 
\end{theorem}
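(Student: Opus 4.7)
The plan is to deduce the theorem by combining Theorem~\ref{thm:WW} with the combinatorial structure of $(3+1)$-free posets. A graph $G$ is a claw-free incomparability graph precisely when $G$ is the incomparability graph of a $(3+1)$-free poset $P$. In that setting, the stable sets of $G$ are exactly the chains of $P$, so a semi-ordered stable partition of $G$ of type $\mu$ is an ordered list of chains in $P$ whose multiset of sizes is $\mu$, with chains of equal length internally ordered.

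First, I would use Theorem~\ref{thm:WW} to expand
\[
[s_{\lambda}]X_G \;=\; \sum_{T \in \mathcal{T}_{\lambda}} \mathrm{sgn}(T)\, so_G(T),
\]
and reinterpret each pair $(T,\pi)$, where $\pi$ is a semi-ordered stable partition of type $\Lambda(\kappa_T)$, as a filling $F_{T,\pi}$ of the diagram of $\lambda$ by elements of $P$: the $i$-th rim hook $r_i$ of $T$ is filled with the $i$-th chain of $\pi$, arranged so that chain order increases along each $N$-step of $r_i$. This encodes the entire pair in a single tableau-like object.

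Second, I would single out the \emph{Gasharov P-tableaux} among these fillings, namely those in which every rim hook of $T$ is a horizontal strip and each column of $F_{T,\pi}$ is an increasing chain in $P$. These distinguished fillings all carry the same positive sign and, after accounting for the semi-ordering of parts of equal length, give precisely the nonnegative count
\[
[s_{\lambda}]X_G \;\ge\; \#\{\text{P-tableaux of shape } \lambda \text{ over } P\}.
\]

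Third, and this is where the real work lies, I would construct a sign-reversing involution on the set of remaining pairs. Any non-distinguished pair exhibits a first ``violation'' in some fixed reading order of $\lambda$: either a column adjacency whose two entries are incomparable in $P$ (or comparable in the wrong direction), or an $N$-step inside some rim hook whose two endpoints are incomparable in $P$. Using the $(3+1)$-free hypothesis, I would prescribe a canonical swap of poset elements between two rim hooks that locally corrects the first violation and changes the number of $N$-steps in the diagram by exactly one, thereby flipping $\mathrm{sgn}(T)$. The main obstacle is verifying that this swap is well-defined and involutive---that is, that the output pair has the same first violation as its input and that the swap does not introduce any earlier violation. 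The $(3+1)$-free condition is indispensable here: it forbids the very four-element subposet whose presence would force a cascade of incomparabilities that no pairwise swap could resolve, and thereby guarantees that $\iota^2 = \mathrm{id}$ on the non-distinguished pairs. With the involution in place, the signed terms outside the distinguished set cancel, leaving only the nonnegative contribution from the P-tableaux.
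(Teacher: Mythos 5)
The paper does not actually prove this statement: it is imported directly from Gasharov \cite{gash_96}, so there is no internal proof to compare against. Your outline is essentially a reconstruction of Gasharov's own strategy (a signed sum over array-like objects cancelled down to $P$-tableaux), transported through Theorem~\ref{thm:WW} and rim-hook fillings. As a proof, however, it has a genuine gap: the sign-reversing involution in your third step \emph{is} the mathematical content of the theorem, and you do not construct it. Saying you ``would prescribe a canonical swap'' that corrects the first violation and changes the number of $N$-steps by exactly one is precisely the claim that needs proof. The difficulty is that a local exchange of elements between two rim hooks must simultaneously (a) leave both rim hooks filled by chains of $P$ in the correct order, (b) leave the underlying diagram a legal special rim hook tabloid of shape $\lambda$, (c) change the total number of $N$-steps by an odd amount, and (d) be recoverable, i.e.\ the output must exhibit the \emph{same} first violation so that applying the map again inverts it. Verifying (a)--(d) is where the $(3{+}1)$-free hypothesis enters in a precise, case-by-case way, and a one-sentence appeal to ``forbidding a cascade of incomparabilities'' does not substitute for that verification.

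There is also a smaller but concrete error in your identification of the fixed points. A Gasharov $P$-tableau requires the rows to be increasing chains but requires of the columns only that the entry below is \emph{not less than} the entry above ($a_{i+1,j} \not< a_{i,j}$), not that each column be an increasing chain of $P$. If you insist that columns be chains, you obtain a strictly smaller fixed-point set in general, and then no sign-reversing involution on the complement can exist: the total signed sum equals the number of genuine $P$-tableaux, so the complement of your smaller set would have a nonzero signed sum. This is the kind of detail that would have derailed the construction of the involution had you attempted to carry it out.
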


\section{Special Rim Hook $G$-Tabloids}
\label{sect:SRH_G_TABS}

  In this section, we introduce an altered version of an SRH tabloid, which corresponds to a graph $G$. 
  % This new definition leads to a corollary of Theorem \ref{thm:WW} that offers a different combinatorial interpretation of Schur coefficients.

 \begin{definition}
 \label{def:SRH_G_tabs}
 Consider a graph $G$ and a partial order $\le$ on the vertices of $G$ satisfying: if vertices $u$ and $v$ are nonadjacent, then $u$ and $v$ are comparable.

 We can then define an \textit{SRH $G$-tabloid} to be an SRH tabloid such that every cell is filled with a vertex of $G$ and the following conditions are met.
\begin{itemize}
    \item Cells spanned by the same rim hook contain vertices which form a stable set.
    \item For each rim hook, reading the corresponding vertices from \svwj{the cell in the first column} results in an increasing sequence with respect to the partial order.
\end{itemize}
The \textit{sign} and \textit{shape} of an SRH $G$-tabloid are respectively the sign and shape of the underlying SRH tabloid.
We denote the set of all SRH $G$-tabloids of shape $\lambda$ by $\mathcal{T}_{\lambda,G}$. Note that if the graph $G$ or the partition $\lambda$ is not properly defined, we set $\mathcal{T}_{\lambda,G}=\emptyset$. This convention is to address some notational subtleties appearing in the proofs of Propositions \ref{prop:rec_for_most_coef} and \ref{prop:spi_rec} (Notation \ref{notation:rec_set_up} is also related to this).
 \end{definition}

 % While working with SRH $G$-tabloids, it is often helpful to distinguish between different parts of the underlying diagram.

\begin{definition}
\label{def:tail_head}
Let $T$ be an SRH $G$-tabloid. We define the \textit{tail} of $T$, denoted $\mathrm{tl}(T)$, to be the part of $T$ containing all rows of length $1$. Likewise, we define the \textit{head} of $T$, denoted $\mathrm{hd}(T)$ to be the part of $T$ containing all rows of \svwj{lengths} strictly greater than $1$. Either tl$(T)$ or hd$(T)$ could be empty, depending on the shape of $T$. 
\end{definition}

Let $T$ and $T'$ be SRH $G$-tabloids. We say that
\[
\mathrm{tl}(T)=\mathrm{tl}(T') \orr \mathrm{hd}(T)=\mathrm{hd}(T')
\]
if and only if the tails, or respectively the heads, of $T$ and $T'$ are equal as SRH $G'$-tabloids for some subgraph $G'$ of $G$.
 
 There is some freedom afforded by the choice of a partial order in considering SRH $G$-tabloids. First, if $G$ is the incomparability graph of some poset $P$, it is natural to use the partial order of the underlying poset. Since $G$ is an incomparability graph, two vertices are non-adjacent if and only if they are comparable so the necessary condition is satisfied. If $G$ is not an incomparability graph, we must take a different approach to choosing the partial order. The most direct technique is to choose a total order on $G$ via a numerical labeling of the vertices.

\begin{example}
Consider the graph $G$ depicted below, for which we have chosen a labeling of the vertices. 
\begin{figure}[H]
\centering
\begin{tikzpicture}
%% vertices
\node[] at (-2,1.5) {$G=$};
\node[circle,draw,scale=.75] at (1,1.5) (1) {$1$};
\node[circle,draw,scale=.75] at (1,.5) (2) {$2$};
\node[circle,draw,scale=.75] at (0,.5) (4) {$4$};
\node[circle,draw,scale=.75] at (0,1.5) (3) {$3$};
\node[circle,draw,scale=.75] at (1.75,2.25) (5) {$5$};
\node[circle,draw,scale=.75] at (2.75,2.25) (8) {$8$};
\node[circle,draw,scale=.75] at (0,2.5) (7) {$7$};
\node[circle,draw,scale=.75] at (2,.5) (6) {$6$};
\draw (8)--(5)--(1)--(2)--(4)--(3)--(2)--(6);
\draw (7)--(3)--(1)--(4);
\end{tikzpicture}
% \caption{A labeled graph which is not an incomparability graph.}
% \label{fig:non_inc_graph_labeled}
\end{figure}
In this case, $G$ is not an incomparability graph. \svwj{This is because the subgraph obtained by removing the vertices labeled 4 and 8 is not an incomparability graph \cite{stan_95}.} Below, we list some SRH $G$-tabloids of shape $(3,2,1^3),$ that is, elements of $\mathcal{T}_{(3,2,1^3),G},$ using our numerical labeling of the vertices as our partial order. \svw{The bottom three rows form the tail,  the top two rows form the head, and all are different.}

\begin{figure}[H]
\centering
\begin{tikzpicture}
%first diagram
 \node at (1,0) {\ydiagram{3,2,1,1,1}};
  \node at (.325,1.3) [] (4){$4$};
  \node at (.975,1.3) [] (7){$7$};
  \node at (1.625,1.3) [] (8){$8$};
  \node at (.325,.65) [] (1){$1$};
  \node at (.975,.65) [] (6){$6$};
  \node at (.325,0) [] (2){$2$};
  \node at (.325,-.65) [] (5){$5$};
  \node at (.325,-1.3) [] (3){$3$};
  \draw [thick] (.525,.65)--(.775,.65);
  \draw [thick] (.975,.85)--(.975,1.1);
  \draw [thick] (1.175,1.3)--(1.425,1.3);
  \draw [thick] (.325,-1.1)--(.325,-.85);
%second diagram
 \node at (4,0) {\ydiagram{3,2,1,1,1}};
  \node at (3.325,1.3) [] (1){$1$};
  \node at (3.975,1.3) [] (6){$6$};
  \node at (4.625,1.3) [] (8){$8$};
  \node at (3.325,.65) [] (5){$5$};
  \node at (3.975,.65) [] (7){$7$};
  \node at (3.325,0) [] (2){$2$};
  \node at (3.325,-.65) [] (3){$3$};
  \node at (3.325,-1.3) [] (4){$4$};
  \draw [thick] (3.525,.65)--(3.775,.65);
  \draw [thick] (3.525,1.3)--(3.775,1.3);
  \draw [thick] (3.325,.2)--(3.325,.45);
  \draw [thick] (4.175,1.3)--(4.425,1.3);
%third diagram
  \node at (7,0) {\ydiagram{3,2,1,1,1}};
  \node at (6.325,1.3) [] (4){$4$};
  \node at (6.975,1.3) [] (5){$5$};
  \node at (7.625,1.3) [] (6){$6$};
  \node at (6.325,.65) [] (2){$2$};
  \node at (6.975,.65) [] (7){$7$};
  \node at (6.325,0) [] (2){$8$};
  \node at (6.325,-.65) [] (3){$3$};
  \node at (6.325,-1.3) [] (4){$1$};
  \draw [thick] (6.525,.65)--(6.775,.65);
  \draw [thick] (6.525,1.3)--(6.775,1.3);
  \draw [thick] (6.325,-.45)--(6.325,-.2);
  \draw [thick] (7.175,1.3)--(7.425,1.3);
%fourth diag
  \node at (10,0) {\ydiagram{3,2,1,1,1}};
  \node at (9.325,1.3) [] (3){$3$};
  \node at (9.975,1.3) [] (5){$5$};
  \node at (10.625,1.3) [] (6){$6$};
  \node at (9.325,.65) [] (4){$4$};
  \node at (9.975,.65) [] (7){$7$};
  \node at (9.325,0) [] (1){$1$};
  \node at (9.325,-.65) [] (2){$2$};
  \node at (9.325,-1.3) [] (8){$8$};
  \draw [thick] (9.525,.65)--(9.775,.65);
  \draw [thick] (9.525,1.3)--(9.775,1.3);
  \draw [thick] (10.175,1.3)--(10.425,1.3);
\end{tikzpicture}
% \caption{Examples of SRH $G$-tabloids corresponding to a non-incomparability graph.}
% \label{fig:SRH_Gtabs_for_non_inc}
\end{figure}
\end{example}

By considering SRH $G$-tabloids, we may now obtain an alternative combinatorial interpretation of Schur coefficients.

\begin{corollary}
\label{cor:better_s_form}
Consider any graph $G$, a partition $\lambda$ of $|V(G)|$, and a partial order on the vertices of $G$ such that nonadjacent vertices are comparable. We have
\[
[s_{\lambda}]X_G=\sum_{T\in \mathcal{T}_{\lambda,G}}\mathrm{sgn}(T).
\]
\end{corollary}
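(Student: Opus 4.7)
The plan is to reduce directly to Theorem~\ref{thm:WW} by constructing, for each fixed underlying SRH tabloid $T \in \mathcal{T}_\lambda$, a bijection between semi-ordered stable partitions of $G$ of type $\Lambda(\kappa_T)$ and SRH $G$-tabloids whose underlying SRH tabloid is $T$. Since every SRH $G$-tabloid inherits its sign from its underlying SRH tabloid, summing such a bijection over $\mathcal{T}_\lambda$ converts the formula in Theorem~\ref{thm:WW} into the formula in the corollary.

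To define the bijection, fix $T \in \mathcal{T}_\lambda$ with rim hooks $r_1, \dots, r_j$ of lengths $\kappa_T = (\kappa_1, \dots, \kappa_j)$, read from the bottom of the diagram upward. For each integer $s$ appearing in $\mu = \Lambda(\kappa_T)$, list the rim hooks of $T$ of length $s$ in this bottom-to-top order. Given a semi-ordered stable partition of $G$ of type $\mu$, match the $k$th rim hook of length $s$ (in the above order) to the $k$th stable set of size $s$ in the semi-ordered partition. This matching is well-defined because the number of parts of size $s$ in $\mu$ equals the number of rim hooks of $T$ of length $s$, and it uses precisely the ordering data recorded by "semi-ordered."

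Once a stable set $S$ is assigned to a rim hook $r_i$, there is a unique way to place the vertices of $S$ into the cells of $r_i$ so that the resulting filling is an SRH $G$-tabloid. Indeed, any two vertices of $S$ are nonadjacent, hence comparable under the partial order, so $S$ is totally ordered. Placing its elements in increasing order starting from the first-column cell of $r_i$ yields the unique compatible filling. Carrying this out for every rim hook gives an SRH $G$-tabloid with underlying shape $T$, and the procedure is clearly reversible: from any SRH $G$-tabloid above $T$, collecting the vertices of each rim hook into a set produces a stable set of the appropriate size, and reading the rim hooks of each length in bottom-to-top order recovers the required ordering on stable sets of each size.

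Thus $so_G(T)$ equals the number of SRH $G$-tabloids whose underlying SRH tabloid is $T$. Substituting into Theorem~\ref{thm:WW} and using that $\mathrm{sgn}$ of an SRH $G$-tabloid equals $\mathrm{sgn}$ of its underlying SRH tabloid, we obtain
\[
[s_\lambda]X_G \;=\; \sum_{T \in \mathcal{T}_\lambda} \mathrm{sgn}(T)\, so_G(T) \;=\; \sum_{T' \in \mathcal{T}_{\lambda,G}} \mathrm{sgn}(T'),
\]
as required. The only place requiring care is verifying that the hypothesis on the partial order (nonadjacent vertices are comparable) is exactly what forces each stable set to be totally ordered, which is what guarantees uniqueness of the increasing filling of each rim hook; without this hypothesis the count would be inflated by a factor equal to the number of linear extensions of each stable set.
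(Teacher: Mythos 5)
Your proof is correct and takes essentially the same approach as the paper: both fix an underlying SRH tabloid $T\in\mathcal{T}_{\lambda}$ and construct a sign-preserving bijection between semi-ordered stable partitions of type $\Lambda(\kappa_T)$ and the SRH $G$-tabloids lying over $T$, matching equal-length rim hooks to equal-size stable sets via the bottom-to-top order and filling each rim hook with its assigned set in the unique increasing order. Your closing observation that comparability of nonadjacent vertices is precisely what makes each stable set totally ordered, and hence the filling unique, is a point the paper uses only implicitly.
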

\begin{proof}
Each summand in the formula of Theorem \ref{thm:WW} counts pairings of an SRH tabloid $T\in \mathcal{T}_{\lambda}$ with semi-ordered stable partitions of $G$ of type $\Lambda(\kappa_T)$. The summand is then assigned a sign equal to $\mathrm{sgn}(T)$. For any $T\in \mathcal{T}_{\lambda}$, we claim the set of pairings of $T$ with semi-ordered stable partitions is in bijection with all SRH $G$-tabloids $T'$ of shape $\lambda$ and content $\kappa_T$. To obtain the map
\begin{align*}
\Psi_T: \{(T,\Omega)\,\mid\, \Omega\text{ is a semi-ordered} &\text{ stable partition of $G$ of type $\Lambda(\kappa_T)$}\}\\
&\longrightarrow \\
\{T'\in\mathcal{T}_{\lambda, G}\,&\mid\, \kappa_{T'}=\kappa_T\},
\end{align*}
we construct an SRH $G$-tabloid from a given $(T,\Omega)$ by identifying each stable set of size $t$ in $\Omega$ with a rim hook of length $t$ in $T$. For any $t\ge 1$, we have the same number of stable sets of size $t$ as rim hooks of length $t$ in $T$ because $\Omega$ is of type $\Lambda(\kappa_T)$. Moreover, the rim hooks of length $t$ in $T$ can be assigned an ordering based on the position (from bottom to top) of their southwest-most cell in the diagram. Since $\Omega$ is also semi-ordered, we can thus uniquely associate the stable sets of size $t$ to rim hooks of length $t$ such that the ordering on the stable sets is consistent with the ordering on the rim hooks.

To obtain an inverse map
\begin{align*}
\Psi_T^{-1}: \{T'\in\mathcal{T}_{\lambda, G}\,&\mid\, \kappa_{T'}=\kappa_T\}\\
\longrightarrow\\
\{(T,\Omega)\,\mid\, \Omega\text{ is a semi-ordered stable}&\text{ partition of $G$ of type $\Lambda(\kappa_T)$}\},
\end{align*}
we let $T$ be the underlying SRH tabloid structure of a given $T'$ and construct a semi-ordered partition $\Omega$ of $G$ by taking parts equal to the sets of vertices associated with each rim hook. We order the parts of equal size according to the order of the corresponding rim hooks by southwest-most cells (as we defined previously). Since $\kappa_{T'}=\kappa_T$, $\Omega$ will be of type
\[
\Lambda(\kappa_{T'})=\Lambda(\kappa_T).
\]
Moreover, this map preserves the signs of the associated tabloids because, by definition, the sign of $T'$ is that of its underlying SRH tabloid $T$. Lastly, we have
\[
\mathcal{T}_{\lambda,G}=\bigsqcup_{T\in\mathcal{T}_{\lambda}}\{T'\in\mathcal{T}_{\lambda, G}\,\mid\, \kappa_{T'}=\kappa_T\}
\]
so constructing bijections $\Psi_T$ for every $T\in \mathcal{T}_{\lambda}$ results in the equality
\[
\sum_{T\in \mathcal{T}_{\lambda}}\mathrm{sgn}(T)so_G(T)=\sum_{T\in \mathcal{T}_{\lambda,G}}\mathrm{sgn}(T).
\]
% so
% \[
% \mathrm{sgn}(T)=\mathrm{sgn}(T').
% \]
% \[\kappa_{T'}=\kappa_{T}\qquad\text{implies}\qquad\mathrm{sgn}(T)=\mathrm{sgn}(T').\]
% Lastly, we have
% \[
% \mathcal{T}_{\lambda,G}=\bigsqcup_{T\in\mathcal{T}_{\lambda}}\{T'\in\mathcal{T}_{\lambda, G}\,\mid\, \kappa_{T'}=\kappa_T\}
% \]
% so constructing bijections $\Psi_T$ for every $T\in \mathcal{T}_{\lambda}$ results in the equality
% \[
% \sum_{T\in \mathcal{T}_{\lambda}}\mathrm{sgn}(T)so_G(T)=\sum_{T\in \mathcal{T}_{\lambda,G}}\mathrm{sgn}(T).
% \]
\end{proof}

\begin{remark}
In the case where $G=\text{inc}(P)$, SRH $G$-tabloids of shape $\lambda$ are in sign-preserving bijection with $P$-arrays of shape $\pi(\lambda)$, as discussed in \cite{gash_96}. Accordingly, Corollary \ref{cor:better_s_form} can also be obtained as a corollary to the proof of Theorem 3 in \cite{gash_96}.
\end{remark}

\section{Generalized Nets}
\label{sect:gen_nets}

% \subsection{Definitions and Background for Generalized Nets}
% \label{sect:defns_gen_nets}

\begin{definition}
\label{def:gen_nets}
A \textit{generalized net} $GN_{n,m}$, $n\ge 1$, $n\ge m\ge 0$, is a complete graph on $n$ vertices with $m$ degree one vertices appended to distinct vertices in the complete graph. The \svwj{set of} vertices in the complete graph (of degree $n-1$ and $n$) are collectively referred to as the \textit{body}. The degree $1$ vertices are referred to as \textit{pendants}, the degree $n$ vertices are referred to as \textit{anchors}, and the degree $n-1$ vertices are referred to as \textit{buoys}. 
\end{definition}

% We note that our definition of generalized nets is more general than the definition of generalized nets given in \cite{spiders_kin} since we allow a variable number of pendants.

\begin{example}\,
\svwj{The following generalized net has 3 pendants, attached to 3 anchors, and 2 buoys together giving 5 vertices in the body.}
% The generalized net $GN_{5,3}$ is depicted in Figure \ref{fig:net_example}.
\begin{figure}[H]
\centering
\begin{tikzpicture}[scale=.75]
%% vertices
\node[] at (-2,1.5) {$GN_{5,3}=$};
\node[circle,fill=black, draw,scale = .5] at (1,1.5) (2) {};
\node[circle,fill=black,draw,scale = .5] at (1.5,.5) (3) {};
\node[circle,fill=black,draw,scale = .5] at (-.5,.5) (5) {};
\node[circle,fill=black,draw,scale = .5] at (0,1.5) (1) {};
\node[circle,fill=black,draw,scale = .5] at (.5,0) (4) {};
\node[circle,fill=black,draw,scale = .5] at (1.75,2.25) (7) {};
\node[circle,fill=black,draw,scale = .5] at (0,2.5) (6) {};
\node[circle,fill=black,draw,scale = .5] at (2.5,.5) (8) {};
\draw (1)--(2)--(3)--(4)--(5)--(1)--(3)--(5)--(2)--(4)--(1);
\draw (1)--(6);
\draw (2)--(7);
\draw (3)--(8);
\end{tikzpicture}
\label{fig:net_example}
\end{figure}
\end{example}

One may observe directly that all generalized nets are claw-free. Generalized nets are also not incomparability graphs whenever there are three or more \svwj{pendants; one way} to see this is that these graphs contain the subgraph $GN_{3,3}$, which is used in \cite{stan_95} as one of the simplest examples of a claw-free non-incomparability graph.

Throughout this section, we work with two different choices of labelings on generalized nets, each of which is advantageous in distinct situations.

\begin{definition}
In a \textit{pendant-first labeling} for a generalized net, we label the pendant vertices $1,\dots,m,$ we label the anchors $m+1,\dots,2m$ \svwj{so that each anchor $m+i$ is adjacent to the pendant labeled $i$ for $1\le i \le m$}, and we label the buoys $2m+1,\dots, n+m$. In a \textit{pendant-last labeling} for a generalized net, we label the buoys $1,\dots,n-m,$ we label the anchors $n-m+1,\dots,n,$ and we label the pendants $n+1,\dots,n+m$ (so that each anchor $i$ is adjacent to the pendant labeled $i+m$ for $n-m+1\le i \le n$).
\end{definition}

\begin{example}

Below, we depict a pendant-first labeling of $GN_{4,2}$ on the left and a pendant-last labeling of $GN_{4,2}$ on the right. \svw{On the left, vertices 1 and 2 are pendants, 3 and 4 are anchors, and 5 and 6 are buoys. Meanwhile, on the right, vertices 1 and 2 are buoys, 3 and 4 are anchors, and 5 and 6 are pendants.}
\begin{figure}[H]
\centering
\begin{tikzpicture}[scale = .75]
%% vertices
\node[circle,draw,scale = .5] at (1,1.5) (4) {$4$};
\node[circle,draw,scale = .5] at (1,.5) (5) {$5$};
\node[circle,draw,scale = .5] at (0,.5) (6) {$6$};
\node[circle,draw,scale = .5] at (0,1.5) (3) {$3$};
\node[circle,draw,scale = .5] at (1.75,2.25) (2) {$2$};
\node[circle,draw,scale = .5] at (0,2.5) (1) {$1$};
\draw (1)--(3)--(4)--(5)--(6)--(3)--(5);
\draw (6)--(4)--(2);
\node[circle,draw,scale = .5] at (5,1.5) (4) {$4$};
\node[circle,draw,scale = .5] at (5,.5) (5) {$1$};
\node[circle,draw,scale = .5] at (4,.5) (6) {$2$};
\node[circle,draw,scale = .5] at (4,1.5) (3) {$3$};
\node[circle,draw,scale = .5] at (5.75,2.25) (2) {$6$};
\node[circle,draw,scale = .5] at (4,2.5) (1) {$5$};
\draw (1)--(3)--(4)--(5)--(6)--(3)--(5);
\draw (6)--(4)--(2);
\end{tikzpicture}
% \caption{Examples of pendant-first and pendant-last labelings.}
\label{fig:pend_first_vs_pend_last}
\end{figure}
\end{example}

% The goal of this section is to use Corollary \ref{cor:better_s_form} to prove that all generalized nets are Schur-positive.

% \subsection{A Recurrence Relation for Schur Coefficients of Generalized Nets}
% \label{sect:rec_rel}

% In order to demonstrate the Schur-positivity of generalized nets, we focus on the vertices in the tails of SRH $GN_{n,m}$-tabloids. 
% In this section, we determine a recurrence relation on Schur coefficients for which the tabloids we are counting have a tail of length at least $1$.

We start with a lemma concerning the number of pendants which may be in the tail of an  SRH $GN_{n,m}$-tabloid.

\begin{lemma}
\label{lemma:tail_cant_be_pend_filled}
Consider a partition $\lambda=(\lambda_1,\dots,\lambda_k)$ \svwj{with} $\lambda_k=1$.  For any $T\in\mathcal{T}_{\lambda, GN_{n,m}}$, $n\ge 1$, $n\ge m\ge 0$, we have that, regardless of the choice of labeling, the tail cannot contain only pendants.
\end{lemma}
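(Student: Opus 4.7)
The plan is to argue by contradiction, exploiting the fact that the body of $GN_{n,m}$ is the clique $K_n$, so every stable set, and in particular every rim hook in an SRH $GN_{n,m}$-tabloid, contains at most one body vertex. Suppose that $T \in \mathcal{T}_{\lambda, GN_{n,m}}$ has a tail whose cells contain only pendants. Let $h$ denote the number of head rows (those with $\lambda_i \ge 2$), so the tail consists of $t := k - h$ cells with $t \ge 1$ because $\lambda_k = 1$. Since the tail is pendant-only, all $n$ body vertices must lie in the head.

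The first step is a lower bound on rim hooks. I would call a rim hook of $T$ \emph{head-touching} if at least one of its cells lies in a head row. Any rim hook containing a body vertex is head-touching, and by the clique condition each head-touching rim hook contains at most one body vertex. Therefore $T$ has at least $n$ head-touching rim hooks.

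The second step, which I expect to be the only delicate part of the argument, is a matching upper bound. Each rim hook meets the first column in a contiguous vertical strip, and distinct rim hooks have disjoint first-column strips. I would then argue that the topmost first-column cell of a head-touching rim hook must lie in the head: otherwise it sits in a tail row of length $1$, and since no cell exists to its right into which the rim hook could turn, the entire rim hook would be confined to the first column within tail rows, contradicting the head-touching assumption. Because the head contributes only $h$ cells to the first column, this yields at most $h$ head-touching rim hooks, hence $n \le h$.

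Finally, since every head row has length at least $2$, the head contains at least $2h$ cells, while it also contains exactly $|\lambda| - t = (n+m) - t$ cells. Combining these gives $(n+m) - t \ge 2h \ge 2n$, so $t \le m - n \le 0$, contradicting $t \ge 1$. The entire argument is insensitive to the choice of partial order on $V(GN_{n,m})$, so the lemma holds for both the pendant-first and pendant-last labelings (and indeed for any labeling compatible with the SRH $G$-tabloid conditions), as required.
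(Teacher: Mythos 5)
Your proof is correct and follows essentially the same approach as the paper: both arguments bound the number of body vertices that can fit in the head by observing that each rim hook contains at most one body vertex and that the number of rim hooks meeting the head is at most the number of head rows, then derive a contradiction from $m\le n$ and the tail being nonempty. Your second step merely spells out in more detail a point the paper asserts without elaboration, and your final inequality is an equivalent rearrangement of the paper's.
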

\begin{proof}
Assume the tail of $T$ has $h$ pendants and no vertices from the body. The head must contain $n+m-h$ cells to include all the other vertices. Accordingly, the head may have at most $\lfloor \frac{n+m-h}{2} \rfloor$ rows and at most $\lfloor \frac{n+m-h}{2} \rfloor$ distinct rim hooks since every rim hook must intersect the first column. We then have
\[
\#\text{vertices from the body in head}\le \# \text{rim hooks in the head} \le \left\lfloor \frac{n+m-h}{2} \right\rfloor < n.
\]
The first inequality follows since every \svw{body} vertex \svw{in} the head must be in its own rim hook \svw{by definition}. The last inequality follows since $m\le n$ and $h\ge 1$. Hence, at least one of the $n$ vertices from the body must be in the tail, contradicting the initial assumption.
\end{proof}

The next proposition shows that certain sets of SRH $G$-tabloids can be canceled out in terms of sign via an algorithm involving rearranging the pendants in the tails of these tabloids. We note that the terms ``pendants'' and ``body vertices'' are used in this proposition in a  context \svwj{that is more general} than in the definition of generalized nets, \svwj{and in the former case is less general than the classic definition of ``pendant'' that refers to any vertex of degree 1.}

% The conditions for applying this algorithm are stated in as much generality as possible, in order to make this proposition applicable to other, similar, families of graphs.

% We note that the terms ``pendants'' and ``body vertices'' are used this proposition in a more general context than in the definition of generalized nets.
% When we apply this algorithm to generalized nets, however, the two uses of the terms are consistent.

\begin{proposition}
    \label{prop:rec_algorithm}
    Let $G$ be a graph, let $\lambda=(\lambda_1,\dots,\lambda_k)$ be a partition such that $\lambda_k=\lambda_{k-1}=1$, and let $\mathcal{S}\subseteq \mathcal{T}_{\lambda,G}$ be a subset of SRH $G$-tabloids such that
\[\mathrm{hd}(T)=\mathrm{hd}(T')=H\text{ for all }T, T'\in\mathcal{S}.\]
Let $\mathcal{V}$ denote the set of vertices of $G$ appearing in $\mathrm{tl}(T)$ for all $T\in\mathcal{S}$, and suppose
\[\mathcal{V}=\mathcal{P}\sqcup \mathcal{U},\]
where $\mathcal{P}$ and $\mathcal{U}$ satisfy the following conditions.
\begin{enumerate}[(I)]
    \item We have that $\mathcal{P}$ is a stable set of vertices, called pendants, which are degree $1$ in $G$ and adjacent to at most one vertex in $\mathcal{U}$. If $p\in \mathcal{P}$ is adjacent to some $u\in \mathcal{U}$, we refer to $u$ as the anchor corresponding to  \svw{$p$.}
    \item We have that $\mathcal{U}$ is a nonempty set of vertices, called body vertices, such that each $u\in \mathcal{U}$ is adjacent to at most one pendant.
\end{enumerate}
Lastly, suppose that $\mathcal{S}$ contains exactly the tabloids $T\in\mathcal{T}_{\lambda,G}$ for which \[\mathrm{hd} (T)=H,\] and for which the following conditions are satisfied.
\begin{enumerate}[(i)]
    \item  All distinct body vertices $u,u'\in \mathcal{U}$ are in different rim hooks. 
    \item The bottom cell in $T$ is filled by a pendant $p\in \mathcal{P}$ which is nonadjacent to the vertex in the cell above.
\end{enumerate}
We then have
\[
\sum_{T\in\mathcal{S}}\mathrm{sgn}(T)=0.
\]
\end{proposition}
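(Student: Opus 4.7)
The plan is to construct a sign-reversing involution $\iota \colon \mathcal{S} \to \mathcal{S}$ which pairs each tabloid $T$ with another tabloid $\iota(T)$ satisfying $\mathrm{sgn}(\iota(T)) = -\mathrm{sgn}(T)$, from which the identity $\sum_{T \in \mathcal{S}} \mathrm{sgn}(T) = 0$ follows.

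For $T \in \mathcal{S}$, let $p$ denote the pendant occupying the bottom cell and $v$ the vertex in the cell immediately above $p$; condition (ii) guarantees that $p$ and $v$ are nonadjacent. The primary definition of $\iota$ is a local toggle at the bottom two cells. If $p$ and $v$ lie in the same rim hook of $T$, we split this rim hook by detaching $p$ into a singleton rim hook $\{p\}$. If $p$ lies in a singleton rim hook $\{p\}$ while $v$ is the bottommost cell of a distinct rim hook $R_v$, we merge the two by extending $R_v$ to include $p$ at the bottom. Each of these operations changes the number of north steps by exactly one, hence flips the sign. It is immediate that the split direction always produces an element of $\mathcal{S}$, that $\mathrm{hd}(T)=H$ is preserved in both directions, and that condition (i) is preserved because $p$ is a pendant, so the merged rim hook still has at most one body vertex (inherited from $R_v$).

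The main obstacle is to verify that the merge direction also yields a valid SRH $G$-tabloid in $\mathcal{S}$: the enlarged rim hook $R_v \cup \{p\}$ must form a stable set and have labels strictly increasing from top to bottom. Since $p$ is a pendant, the stable-set obstruction reduces to $p$'s anchor (if any) lying in $R_v$, and the order obstruction reduces to the failure of $v < p$ in the partial order. For tabloids that are obstructed in this way, the plan is to construct a complementary involution by swapping $p$ with a canonically chosen vertex in $R_v$ (for instance, $v$ itself, which must be a pendant when the obstruction is caused by $p$'s anchor being in $R_v$, by a parity argument that uses condition (i) together with the stability of $R_v$), and simultaneously subdividing $R_v$ at the obstructing position so that the resulting rim hooks remain valid. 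A case analysis over the location of $p$'s anchor in the tail and over the partial-order relation between $p$ and $v$, together with the structural hypotheses (I), (II), and (i), should show that each obstructed tabloid pairs uniquely with another obstructed tabloid of opposite sign.

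Assembling the primary toggle and the complementary swap into a single map produces a fixed-point-free sign-reversing involution on $\mathcal{S}$, which yields the desired cancellation. The main technical difficulty will be formulating the complementary rule so that it is self-inverse, interacts correctly with the primary toggle, and respects conditions (i) and (ii) across the various sub-cases; managing this case analysis cleanly -- in particular, identifying the right canonical choice of swap partner -- is where the bulk of the work lies.
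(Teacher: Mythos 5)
Your primary toggle (split/merge at the bottom cell) is exactly the paper's first step, and your diagnosis of the two obstructions --- the anchor of $p$ lying in $R_v$, or the order relation $p>v$ failing --- matches the paper's Case (2) and Case (1). Your proposed swap-and-subdivide for the anchor-obstructed case (swap $p$ with the pendant $v$ and cut $R_v$) is also the paper's move, and it is indeed sign-reversing because it deletes one $N$-step from $R_v$. The gap is in your claim that this can be assembled into a pairing in which ``each obstructed tabloid pairs uniquely with another obstructed tabloid of opposite sign'' via a rule localized at the bottom two cells. That is false: the swap-and-subdivide is an injection from the anchor-obstructed tabloids \emph{into} the order-obstructed ones, not an involution on the obstructed set. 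Concretely, take an order-obstructed tabloid whose bottom two cells are singleton rim hooks containing pendants $p_1>p_2$. Swapping $p_1$ and $p_2$ changes no $N$-steps, so it is sign-preserving; and there is no sign-reversing partner for this tabloid that differs from it only in the bottom two cells. Moreover your rule is not self-inverse: applying swap-and-subdivide to the image of an anchor-obstructed tabloid finds $R_v=\{p\}$ a singleton, so there is nothing to subdivide and you do not return to where you started.

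The missing idea is that the order-obstructed tabloids must be cancelled against tabloids that differ \emph{higher up} in the tail. The paper runs an iterated cascade: after the level-one swap, it toggles the $N$-step above the second cell from the bottom, which produces a new pair of obstruction cases one row higher; those are resolved by another swap-and-subdivide, and so on for $j=2,\dots,|\mathcal{P}|-1$. The recursion terminates because the surviving tabloids carry a strictly decreasing chain $p_1>p_2>\cdots>p_{|\mathcal{P}|}$ of \emph{all} the pendants at the bottom of the tail, and since $\mathcal{U}\neq\emptyset$ the cell above $p_{|\mathcal{P}|}$ holds a body vertex that is not its anchor (those configurations having already been cancelled), so a final $N$-step toggle finishes the job. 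Without this unbounded-depth recursion --- and the accompanying bookkeeping that each level's leftover cases are precisely characterized by which tabloids were consumed at the previous level --- the cancellation cannot be established, so as written your argument does not close.
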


\begin{proof}
We label the vertices of $G$ such that all pendants have labels smaller than the labels of the body vertices.

Consider all $T\in\mathcal{S}$ such that an $N$-step up from the \svwj{bottom} pendant is permissible. We can define a sign-reversing involution on these tabloids by adding the $N$-step if it is not there and removing the $N$-step if it is there.

Thus, we are left counting tabloids for which no $N$-step up from the \svwj{bottom} vertex is allowed. These tabloids fall under two cases.
\begin{enumerate}[\text{Case} (1),leftmargin=*]
    \item The pendant $p$ in the bottom row is below a pendant $q$ such that $p>q$.
    \item The pendant $p$ in the bottom row is below a sequence of pendants $y_1,\dots,y_{d},$ ending with the anchor $u$ corresponding to $p$ such that
    \[
    p<y_1<\cdots<y_{d}<u,
    \]
    $|\mathcal{P}|-1 \ge d\ge 1$, and all consecutive vertices are connected by $N$-steps except $p$ and $y_1$.
\end{enumerate}
In any other case, an edge up from $p$ is permissible. Recall that Condition (i) is that no two body vertices are in the same rim hook.
\svwj{Moreover,} Condition (ii) ensures $p$ is not directly below its corresponding anchor $u$.

Consider any $T$ which falls under Case $(2)$. We map $T$ to the otherwise identical tabloid with the new sequence \[y_1>p<y_2<\cdots<y_d<u.\] This new tabloid falls under Case $(1)$. Moreover, since one $N$-step (from $y_1$ to $y_2$) is removed, this map is sign-reversing.

After applying this map, we consider the remaining Case $(1)$ tabloids. If an $N$-step up from $q$ is permissible, we apply a map which either adds or removes this $N$-step. Hence, we are left again with two subcases, for which no $N$-step up from $q$ is permissible.

\begin{enumerate}[\text{Case} (a),leftmargin=*]
    \item Reading up from the bottom rows, we have the sequence $p>q>r$ for some pendant $r$.
    \item Reading up from the bottom rows, we have the sequence ending with the anchor $u'$ corresponding to $q$ such that
    \[p>q<y_1<\cdots< y_{d'}<u',\]
    where $|\mathcal{P}|-2\ge d'\ge 1$, all consecutive vertices from $y_1$ and on are connected by $N$-steps, and $p>y_1>q$. We note that $d'\ge 1$ since the tabloids with sequence $p>q<u'$ are canceled out by tabloids with sequence $q<p<u'$ in Case $(2)$. Likewise, the condition $p>y_1>q$ holds since tabloids with $p<y_1$ are canceled by the Case $(2)$ tabloids with sequence
    \[
    q<p<y_1<\cdots<y_{d'}<u.
    \]
\end{enumerate}

Take any $T$ which falls under Case (b). We map this tabloid to the otherwise identical tabloid with the new sequence
\[
p>y_1>q<\cdots< y_{d'}<u'.
\]
Once again, this map removes one $N$-step (from $y_1$ to $y_2$) so it is sign-reversing. Moreover, the new tabloid falls under Case (a).

We then cancel out all Case (a) tabloids such that an $N$-step up from $r$ is permissible by adding or removing that $N$-step. We proceed by applying the same method iteratively to the leftover tabloids which fall under Case (a).

On the $j$th step of this iteration for $2\le j \le |\mathcal{P}|-1$, we have tabloids of two cases.
\begin{enumerate}[\text{Case} (A),leftmargin=*]
    \item Reading up from the bottom rows, we have the sequence of pendants
    \[
    p_1>p_2>\cdots > p_{j+1}.
    \]
    \item Reading up from the bottom rows, we have the sequence ending with the anchor $u''$ corresponding to $p_j$ such that
    \[
    p_1>p_2>\cdots > p_j<y_1<\cdots <y_{d''}<u'',
    \]
    where $|\mathcal{P}|-j\ge d''\ge 1$, all consecutive vertices from $y_1$ and on are connected by $N$-steps, and $p_{j-1}>y_1>p_j$. As before, these conditions arise from the other tabloids being canceled out by the map in the $j-1$th step.
\end{enumerate}
We take any tabloid covered by Case (B) and map it to the otherwise identical tabloid with the new sequence
\[
 p_1>p_2>\cdots >p_{j-1}>y_1>p_j<\cdots <y_{d''}<\svw{u'',}
\]
which removes one $N$-step (from $y_1$ to $y_2$) and thus is sign-reversing. We then apply a sign-reversing involution to all Case (A) tabloids for which an $N$-step up from $p_{j+1}$ is permissible (by adding or removing that $N$-step). We then proceed with the $j+1$th step to cancel out the remaining tabloids.

When $j=|\mathcal{P}|-1$, we consider tabloids with sequences (from the bottom up) ending with the anchor $u'''$ corresponding to $p_{|\mathcal{P}|-1}$, such that
\[
p_1>p_2>\cdots>p_{|\mathcal{P}|-2} >p_{|\mathcal{P}|-1}<y_1<u''',
\]
where $y_1$ is connected to $u'''$ by an $N$-step and $p_{|\mathcal{P}|-2}>y_1>p_{|\mathcal{P}|-1}$. We cancel these tabloids out with the otherwise identical tabloids with the new sequence
\[
p_1>p_2>\cdots p_{|\mathcal{P}|-2}>y_1>p_{\svwj{|\mathcal{P}|-1}}<u'''.
\]
Accordingly, the only tabloids which remain have the sequence
\[
p_1>p_2>\cdots > p_{|\mathcal{P}|},
\]
read from the bottom up in their tails. For each of these tabloids, there are no more pendants in the tail besides in this sequence. Furthermore, the tabloids with the anchor $u'''$ above $p_{|\mathcal{P}|}$ have been canceled and $\mathcal{U}\ne \emptyset$ so some other $u\in \mathcal{U}$ is above $p_{|\mathcal{P}|}$. Thus, an $N$-step up from $p_{|\mathcal{P}|}$ is permissible for all remaining tabloids. Hence, we cancel these out via a sign-reversing involution which adds or removes that $N$-step.

We note that at no point in this algorithm do we apply a map which results in two body vertices being in the same rim hook, so Condition (i) is satisfied for all the tabloids we considered. 
Moreover, all the tabloids we considered have pendants in the bottom row, nonadjacent to the vertex in the cell above, so Condition (ii) is always satisfied as well. Since $\mathcal{S}$ contains all tabloids satisfying the conditions of the proposition, all the maps send elements of $\mathcal{S}$ to other elements of $\mathcal{S}$.

We conclude
\[
\sum_{T\in\mathcal{S}}\mathrm{sgn}(T)=0.
\]
\end{proof}

% In order to better illustrate the sign-reversing maps employed in the proof of Proposition \ref{prop:rec_algorithm}, we include the following example with tabloid diagrams.

\begin{example}
Consider some $\mathcal{S}\subseteq \mathcal{T}_{\lambda, G}$ satisfying the conditions of Proposition \ref{prop:rec_algorithm}, where
\[
\lambda=(2^2,1^{8}),\qquad G=GN_{6,6},\qquad |\mathcal{P}|=4,\andd |\mathcal{U}|=4.\]

In this example, we depict sign-reversing maps on tabloids in $\mathcal{S}$, as used in the second step of the iteration in the proof of Proposition \ref{prop:rec_algorithm}, in the case where $d'=2$. First, we demonstrate below how the tabloids which fall under Case (b) are canceled.

% First, we demonstrate how the tabloids which fall under Case (b) are canceled in Figure \ref{fig:tab_cancelation1}.

\begin{figure}[H]
\centering
\begin{tikzpicture}[scale=.75]
%first diagram
 \node[scale=.75] at (1,0) {\ydiagram{2,2,1,1,1,1,1,1,1,1}};
 \node[scale=.75] at (.65,2.9) [] (a){$*$};
 \node[scale=.75] at (1.3,2.9) [] (a){$*$};
 \node[scale=.75] at (.65,2.25) [] (a){$*$};
   \node[scale=.75] at (.65,1.6) [] (a){$*$}
  ;
   \node[scale=.75] at (1.3,2.25) [] (c){$*$};
   \node[scale=.75] at (.65,.95) [] (d){$*$};
   \node[scale=.75] at (.65,.3) [] (f){$*$};
   \node[scale=.75] at (.65,-.35) [] (e){$u$};
   \node[scale=.75] at (.65,-1) [] (i){$y_2$};
   \node[scale=.75] at (.65,-1.65) [] (i){$y_1$};
   \node[scale=.75] at (.65,-2.3) [] (i){$p_2$};
   \node[scale=.75] at (.65,-2.95) [] (i){$p_1$};
  \draw [thick] (.65,-.8)--(.65,-.55);
  \draw [thick] (.65,-1.45)--(.65,-1.2);
%second diagram
 \node[scale=.75] at (2.5,0) {$\longleftrightarrow$};
  \node[scale=.75] at (4,0) {\ydiagram{2,2,1,1,1,1,1,1,1,1}};
 \node[scale=.75] at (3.65,2.9) [] (a){$*$};
 \node[scale=.75] at (4.3,2.25) [] (a){$*$};
 \node[scale=.75] at (4.3,2.9) [] (a){$*$};
 \node[scale=.75] at (3.65,2.25) [] (a){$*$};
   \node[scale=.75] at (3.65,1.6) [] (a){$*$};
   \node[scale=.75] at (3.65,.95) [] (d){$*$};
   \node[scale=.75] at (3.65,.3) [] (f){$*$};
   \node[scale=.75] at (3.65,-.35) [] (e){$u$};
   \node[scale=.75] at (3.65,-1) [] (i){$y_2$};
   \node[scale=.75] at (3.65,-1.65) [] (i){$p_2$};
   \node[scale=.75] at (3.65,-2.3) [] (i){$y_1$};
   \node[scale=.75] at (3.65,-2.95) [] (i){$p_1$};
  \draw [thick] (3.65,-.8)--(3.65,-.55);
\end{tikzpicture}
% \caption{An illustration of the cancelation of Case (b) tabloids.}
\label{fig:tab_cancelation1}
\end{figure}
In the above depiction, $u$ is the anchor corresponding to $p_2$, which is why an $N$-step up from $p_2$ is not permitted. We also have the relation $p_1>y_1>p_2$ (the cases where $y_1>p_1>p_2$ were canceled by the first step of the iteration). The tabloid on the left falls under Case (b) and the tabloid on the right falls under Case (a).

We illustrate below how we cancel out tabloids under Case (a) for which an $N$-step up from $p_3$ is permissible. 
% We illustrate in Figure \ref{fig:tab_cancelation2} how we cancel out tabloids under Case (a) for which an $N$-step up from $p_3$ is permissible. 

\begin{figure}[H]
\centering
\begin{tikzpicture}[scale=.75]
%first diagram
  \node[scale=.75] at (1,0) {\ydiagram{2,2,1,1,1,1,1,1,1,1}};
 \node[scale=.75] at (.65,2.9) [] (a){$*$};
 \node[scale=.75] at (1.3,2.9) [] (a){$*$};
 \node[scale=.75] at (.65,2.25) [] (a){$*$};
   \node[scale=.75] at (.65,1.6) [] (a){$*$}
  ;
   \node[scale=.75] at (1.3,2.25) [] (c){$*$};
   \node[scale=.75] at (.65,.95) [] (d){$*$};
   \node[scale=.75] at (.65,.3) [] (f){$*$};
   \node[scale=.75] at (.65,-.35) [] (e){$*$};
   \node[scale=.75] at (.65,-1) [] (i){$x$};
   \node[scale=.75] at (.65,-1.65) [] (i){$p_3$};
   \node[scale=.75] at (.65,-2.3) [] (i){$p_2$};
   \node[scale=.75] at (.65,-2.95) [] (i){$p_1$};
  \draw [thick] (.65,-1.45)--(.65,-1.2);
%second diagram
 \node[scale=.75] at (2.5,0) {$\longleftrightarrow$};
 \node[scale=.75] at (4,0) {\ydiagram{2,2,1,1,1,1,1,1,1,1}};
 \node[scale=.75] at (3.65,2.9) [] (a){$*$};
 \node[scale=.75] at (4.3,2.25) [] (a){$*$};
 \node[scale=.75] at (4.3,2.9) [] (a){$*$};
 \node[scale=.75] at (3.65,2.25) [] (a){$*$};
   \node[scale=.75] at (3.65,1.6) [] (a){$*$};
   \node[scale=.75] at (3.65,.95) [] (d){$*$};
   \node[scale=.75] at (3.65,.3) [] (f){$*$};
   \node[scale=.75] at (3.65,-.35) [] (e){$*$};
   \node[scale=.75] at (3.65,-1) [] (i){$x$};
   \node[scale=.75] at (3.65,-1.65) [] (i){$p_3$};
   \node[scale=.75] at (3.65,-2.3) [] (i){$p_2$};
   \node[scale=.75] at (3.65,-2.95) [] (i){$p_1$};
\end{tikzpicture}
% \caption{An illustration of the cancelation of Case (a) tabloids.}
\label{fig:tab_cancelation2}
\end{figure}

In this case, $p_1>p_2>p_3$. One possibility is that $x$ is a pendant $p_4>p_3$ which is in a rim hook that does not include the anchor $u'$ corresponding to $p_3$. Otherwise, $x$ is a body vertex other than $u'$.

After applying these maps, we are left with only Case (a) tabloids for which an $N$-step up from $p_3$ is not permissible. These tabloids fall under the two subcases depicted below. 

% These tabloids fall under the two subcases depicted in Figure \ref{fig:tab_cancelation3}. 

\begin{figure}[H]
\centering
\begin{tikzpicture}[scale=.75]
%first diagram
  \node[scale=.75] at (-4,0) {$\text{Case (A):}$};
  \node[scale=.75] at (-2,0) {\ydiagram{2,2,1,1,1,1,1,1,1,1}};
\node[scale=.75] at (-2.35,2.9) [] (a){$*$};
\node[scale=.75] at (-1.7,2.9) [] (a){$*$};
\node[scale=.75] at (-2.35,2.25) [] (a){$*$};
  \node[scale=.75] at (-2.35,1.6) [] (a){$*$}
  ;
  \node[scale=.75] at (-1.7,2.25) [] (c){$*$};
  \node[scale=.75] at (-2.35,.95) [] (d){$*$};
  \node[scale=.75] at (-2.35,.3) [] (f){$*$};
  \node[scale=.75] at (-2.35,-.35) [] (e){$*$};
  \node[scale=.75] at (-2.35,-1) [] (i){$p_4$};
  \node[scale=.75] at (-2.35,-1.65) [] (i){$p_3$};
  \node[scale=.75] at (-2.35,-2.3) [] (i){$p_2$};
  \node[scale=.75] at (-2.35,-2.95) [] (i){$p_1$};
%second diagram
 \node[scale=.75] at (2,0) {$\text{Case (B):}$};
 \node[scale=.75] at (4,0) {\ydiagram{2,2,1,1,1,1,1,1,1,1}};
\node[scale=.75] at (3.65,2.9) [] (a){$*$};
\node[scale=.75] at (4.3,2.25) [] (a){$*$};
\node[scale=.75] at (4.3,2.9) [] (a){$*$};
\node[scale=.75] at (3.65,2.25) [] (a){$*$};
  \node[scale=.75] at (3.65,1.6) [] (a){$*$};
  \node[scale=.75] at (3.65,.95) [] (d){$*$};
  \node[scale=.75] at (3.65,.3) [] (f){$*$};
  \node[scale=.75] at (3.65,-.35) [] (e){$u'$};
  \node[scale=.75] at (3.65,-1) [] (i){$y_1$};
  \node[scale=.75] at (3.65,-1.65) [] (i){$p_3$};
  \node[scale=.75] at (3.65,-2.3) [] (i){$p_2$};
  \node[scale=.75] at (3.65,-2.95) [] (i){$p_1$};
  \draw [thick] (3.65,-.8)--(3.65,-.55);
\end{tikzpicture}
% \caption{A depiction of the remaining cases after canceling  Case (b) and some Case (a) tabloids.}
\label{fig:tab_cancelation3}
\end{figure}
In Case (A), we have that $p_1>p_2>p_3>p_4$, and in Case (B), we have that $p_1>p_2>y_1>p_3$. 
Since $|\mathcal{P}|=4$, the next step of the algorithm will cancel all remaining tabloids out.
\end{example}

% The final proposition of this section provides a key recurrence relation on most Schur coefficients of the chromatic symmetric functions of generalized nets. Before stating it, we introduce some helpful notation.
We now introduce some helpful notation.
\begin{notation}\hfill
\label{notation:rec_set_up}
\begin{enumerate}
    \item \svwj{For any partition $\lambda=(\lambda_1,\dots,\lambda_k)$ with $\lambda_k=1$,} we denote the subset of $\mathcal{T}_{\lambda,G}$ which has vertex $v$ in the \svwj{bottom} cell of the first column by $\mathcal{T}_{\lambda,G}^v$. Likewise, for some subset $S$ of vertices of $G$, we use the notation
\[
\mathcal{T}_{\lambda,G}^S=\bigcup_{v\in S}\mathcal{T}_{\lambda,G}^v.
\]
\item Next, we define a function that outputs Schur coefficients of a graph's chromatic symmetric function but vanishes whenever either the graph or the coefficient is not properly defined. Explicitly, we let
\[
\xi(\lambda,G)=\begin{cases} [s_{\lambda}]X_G&\qquad\text{if $G$ is a properly defined graph and $\lambda$ is a}\\
&\qquad\text{properly defined partition,}\\
0 &\qquad\text{otherwise}.
\end{cases}
\]
We note that $GN_{n,m}$ where $m>n$ is an example of a graph that is not properly defined. 
\item We also define an operation on partitions corresponding to tabloids with tails of length at least $1$.

Given a partition $\lambda=(\lambda_1,\dots,\lambda_k)$ such that $\lambda_{h},\dots,\lambda_k=1$ for $1\le h \le k$, we define
\[
\lambda\setminus 1^{t}=(\lambda_1,\dots,\lambda_{k-t})
\]
for $t\le k-h+1$. We note if we consider $\lambda\setminus 1^t$ in the case where $\lambda$ includes strictly fewer than $t$ integers equal to $1$, we end up with an undefined partition. 
\end{enumerate}
\end{notation}

% We now provide examples to illustrate situations where $\xi(\lambda,G)=0$. 

% \begin{example}
%     We have
%     \[
%     \xi(\lambda,GN_{n-1,n})=0\qquad \text{for all partitions $\lambda$}
%     \]
%    since $GN_{n-1,n}$ is not a properly defined generalized net.

%     If $\lambda=(3,2,2,1)$, we have
%     \[
%     \xi(\lambda\setminus 1^2,G)=0\qquad\text{for all graphs $G$}
%     \]
%     since $\lambda\setminus 1^2$ is not a properly defined partition.

%     On the other hand, we have
%     \[
%     \xi(\lambda,GN_{n,n})=[s_{\lambda}]X_{GN_{n,n}}\qquad \text{if }\lambda\text{ is a properly defined partition}
%     \]
%     and
%    \[ \xi(\lambda\setminus 1,G)=[s_{\lambda\setminus 1}]X_{G}\qquad \text{if }\lambda = (3,2,2,1)\text{ and $G$ is a properly defined graph.}
%    \]
% \end{example}

\begin{proposition}
\label{prop:rec_for_most_coef}
Let $\lambda=(\lambda_1,\dots,\lambda_k)$ be a partition and assume $\lambda_k=1$. We have
\begin{align}
\begin{split}
\label{eq:rec_for_most_coef}
\xi(\lambda,GN_{n,m})&=m\xi(\lambda\setminus 1,GN_{n-1,m-1}\cup P_1)+(n-m)\xi(\lambda\setminus 1,GN_{n-1,m})\\&+m\xi(\lambda\setminus 1^2,GN_{n-1,m-1})
\end{split}
\end{align}
for $n\ge m\ge 1$.
\end{proposition}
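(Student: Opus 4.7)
The plan is to apply Corollary \ref{cor:better_s_form} to express $\xi(\lambda, GN_{n,m})$ as $\sum_T \mathrm{sgn}(T)$ over $T \in \mathcal{T}_{\lambda, GN_{n,m}}$ and then partition this set by the vertex $v$ in the bottom cell of the first column. Since $\lambda_k = 1$, the bottom cell is a length-$1$ rim hook by itself. Working with the pendant-first labeling, the vertex $v$ falls into one of three classes (buoy, anchor, or pendant), and each will contribute one of the three terms in the recurrence.

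If $v$ is a buoy, then $GN_{n,m}\setminus\{v\} \cong GN_{n-1,m}$, and removing the bottom cell gives a sign-preserving bijection with $\mathcal{T}_{\lambda\setminus 1, GN_{n-1, m}}$; summing over the $n-m$ buoys yields the second term of the recurrence. If $v = a$ is an anchor, then removing $a$ disconnects its pendant, so $GN_{n,m}\setminus\{a\} \cong GN_{n-1,m-1}\cup P_1$, and the analogous bijection produces the first term $m\xi(\lambda\setminus 1, GN_{n-1, m-1}\cup P_1)$.

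The pendant case is the heart of the argument. A key preliminary observation is that because the body of $GN_{n,m}$ is a clique, each rim hook of an SRH $GN_{n,m}$-tabloid contains at most one body vertex; moreover, since pendants have smaller labels than body vertices in the pendant-first labeling, any body vertex in a rim hook must appear last in its reading order. In particular, when the bottom cell contains a pendant $p$ and the cell directly above contains the corresponding anchor $a$, the rim hook $R'$ containing $a$ is simultaneously starting at $a$ (since it lies in the first column) and ending at $a$, forcing $R' = \{a\}$ and $\lambda_{k-1} = 1$. I then introduce a sign-reversing merge/split involution on tabloids with bottom pendant $p$: when $p$ is alone in its rim hook and the cell above contains a vertex $w$ in rim hook $R'$ with $p < w$ and $a \notin R'$, absorb $\{p\}$ into $R'$ by prepending it with an $N$-step; conversely, when $p$'s rim hook extends by an $N$-step to the cell above, split off $\{p\}$. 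The fixed points with $w = a$ are exactly the ``good'' tabloids; removing both $p$ and $a$ from such a tabloid yields a sign-preserving bijection with $\mathcal{T}_{\lambda\setminus 1^2, GN_{n-1, m-1}}$, and summing over the $m$ pendants produces the third term.

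The final and most delicate step is to show that the signed sum over the remaining fixed points, those with $w \neq a$, is zero. When $\lambda_{k-1} = 1$, each cell in the tail is a length-$1$ rim hook, so condition (i) of Proposition \ref{prop:rec_algorithm} is automatic; since $p$ is nonadjacent to $w$ (as $p$'s only neighbor is $a$), condition (ii) holds as well, and Lemma \ref{lemma:tail_cant_be_pend_filled} ensures that the body-vertex set in the tail is nonempty. Grouping these tabloids by their common head and tail-vertex set and invoking Proposition \ref{prop:rec_algorithm} gives the desired cancellation. When $\lambda_{k-1} > 1$, the structural observation and the reading-order condition together force $w$ to be a pendant and $R'$ to extend rightward into row $k-1$; an auxiliary pendant-swap involution pairs these residual configurations into sign-canceling pairs, and in this regime the term $m\xi(\lambda\setminus 1^2, GN_{n-1, m-1})$ correctly vanishes via the convention that $\lambda\setminus 1^2$ is undefined. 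The principal obstacle is tracking $N$-step parity through this layered cancellation, especially in the $\lambda_{k-1} > 1$ subcase, and verifying that every non-good fixed point is paired exactly once.
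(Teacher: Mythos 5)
Your overall strategy is the paper's: split $\mathcal{T}_{\lambda,GN_{n,m}}$ by the vertex in the bottom cell under a pendant-first labeling, peel off anchors and buoys by deleting the bottom cell, identify the tabloids with a pendant directly below its anchor with $\mathcal{T}_{\lambda\setminus 1^2,GN_{n-1,m-1}}$, and cancel the remaining pendant-bottomed tabloids via Proposition \ref{prop:rec_algorithm}. Your merge/split toggle on the bottom pendant is in fact the first step \emph{inside} the proof of Proposition \ref{prop:rec_algorithm}, so performing it first and then invoking that proposition on the surviving fixed points technically violates its hypothesis that $\mathcal{S}$ contain \emph{exactly} all tabloids with head $H$ satisfying (i) and (ii); the clean fix is to apply Proposition \ref{prop:rec_algorithm} directly to the full set of tabloids whose bottom pendant is nonadjacent to the vertex above, as the paper does. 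Also, condition (i) is not automatic because ``each cell in the tail is a length-$1$ rim hook'' (rim hooks in the tail may climb several rows via $N$-steps); it holds because body vertices are pairwise adjacent and so can never share a rim hook.

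The genuine gap is your treatment of the case $\lambda_{k-1}>1$. There you assert that the residual fixed points force $w$ to be a pendant with $R'$ extending into row $k-1$, and you propose to cancel them with ``an auxiliary pendant-swap involution.'' As described this cannot work: exchanging the contents of two cells changes no $N$-steps, so it is sign-preserving, not sign-reversing, and no pairing rule is actually specified. What saves the case is that it is empty: when $\lambda_{k-1}>1$ the tail is the single cell $(k,1)$, and Lemma \ref{lemma:tail_cant_be_pend_filled} (which you invoke only in the $\lambda_{k-1}=1$ case) says the tail cannot consist solely of pendants, so $\mathcal{T}^P_{\lambda,GN_{n,m}}=\emptyset$ and both the pendant contribution and the term $m\xi(\lambda\setminus 1^2,GN_{n-1,m-1})$ vanish outright. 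This is exactly how the paper disposes of that subcase; replacing your unspecified swap with this one-line application of the lemma closes the gap.
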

\begin{proof}
We denote the set of all anchors of $GN_{n,m}$ by $A$, the set of all buoys of $GN_{n,m}$ by $B$, and the set of all pendants of $GN_{n,m}$ by $P$. We count tabloids with a pendant-first labeling. We observe that
\begin{align}
\label{eq:reg_rec_tab_partition}
\mathcal{T}_{\lambda, GN_{n,m}}=\mathcal{T}_{\lambda,GN_{n,m}}^A\sqcup \mathcal{T}_{\lambda,GN_{n,m}}^B\sqcup \mathcal{T}_{\lambda,GN_{n,m}}^P.
\end{align}
% First, assume $n-1\ge m$ so there is at least one buoy in $GN_{n,m}$. 

In the case where an anchor $a\in A$ or buoy $b\in B$ is in the bottom row, there is no $N$-step up from the cell since we are using a pendant-first labeling. Hence, we can map these tabloids to tabloids for which the bottom cell is removed. These are sign-preserving bijections
\begin{align}
\label{eq:reg_rec_biject}
\mathcal{T}_{\lambda,GN_{n,m}}^a\cong \mathcal{T}_{\lambda\setminus 1 ,GN_{n-1,m-1}\cup P_1}\andd \mathcal{T}_{\lambda,GN_{n,m}}^b\cong \mathcal{T}_{\lambda\setminus 1,GN_{n-1,m}},
\end{align}
since removing an anchor detaches a pendant, decreasing the number of body vertices and the number of pendants by one, and removing a buoy just decreases the number of body vertices by one.

From Equation \ref{eq:reg_rec_tab_partition} and Equation \ref{eq:reg_rec_biject}, we then have
\begin{align}
\begin{split}
\label{eq:reg_rec_inc_form}
% [s_{\lambda}]X_{GN_{n,m}}
\xi(\lambda,GN_{n,m})&=m\cdot\sum_{T\in \mathcal{T}_{\lambda\setminus 1 ,GN_{n-1,m-1}\cup P_1}}\mathrm{sgn}(T)+(n-m)\cdot\sum_{T\in \mathcal{T}_{\lambda\setminus 1,GN_{n-1,m}}}\mathrm{sgn}(T)\\
&+\sum_{T\in \mathcal{T}_{\lambda,GN_{n,m}}^P}\mathrm{sgn}(T)\\
% &=m[s_{\lambda\setminus 1}]X_{GN_{n-1,m-1}\cup P_1}+(n-m)[s_{\lambda\setminus 1}]X_{GN_{n-1,m}}+\sum_{T\in \mathcal{T}_{\lambda,GN_{n,m}}^P}\mathrm{sgn}(T)\\
&=m\xi(\lambda\setminus 1,GN_{n-1,m-1}\cup P_1)+(n-m)\xi(\lambda\setminus 1,GN_{n-1,m})\\
&+\sum_{T\in \mathcal{T}_{\lambda,GN_{n,m}}^P}\mathrm{sgn}(T)
\end{split}
\end{align}
since there are $m$ anchors and $n-m$ buoys in the graph.

% On the other hand, if $n=m$, we don't have the second bijection in Equation \ref{eq:reg_rec_biject}. Therefore, we also don't have the second terms in Equations \ref{eq:reg_rec_inc_form}. Thus, again we have
% \begin{align*}
% [s_{\lambda}]X_{GN_{n,m}}&=m\xi(\lambda\setminus 1,GN_{n-1,m-1}\cup P_1)+(n-m)\xi(\lambda\setminus 1,GN_{n-1,m})+\sum_{T\in \mathcal{T}_{\lambda,GN_{n,m}}^P}\mathrm{sgn}(T),
% \end{align*}
% since $\xi(\lambda\setminus 1,GN_{n-1,m})=0$ in this case.

If $\lambda_{k-1}\ne 1$, there are no pendants in the tail of any tabloid $T\in\mathcal{T}_{\lambda,GN_{n,m}}$ by Lemma \ref{lemma:tail_cant_be_pend_filled}. Therefore, we have
\[
\mathcal{T}^P_{\lambda, GN_{n,m}}=\emptyset\so \sum_{T\in \mathcal{T}_{\lambda,GN_{n,m}}^P}\mathrm{sgn}(T)=0.
\]
Moreover, if $\lambda_{k-1}\ne 1$,
\[
\xi(\lambda\setminus 1^2,GN_{n-1,m-1})=0
\]
also holds. Thus, in this case, Equation \ref{eq:rec_for_most_coef} holds.

For the remainder of the proof, we assume $\lambda_{k-1}=1$.

Consider the subset of tabloids in  $\mathcal{T}^P_{\lambda, GN_{n,m}}$ such that a certain pendant $p$ is in the bottom cell and is directly below its anchor. Since there are no $N$-steps up from the anchor, we map these tabloids to tabloids where the bottom two cells are removed. We can accordingly obtain sign-preserving bijections from the aforementioned subset to
\[
% \mathcal{T}^p_{\lambda, GN_{n,m}}\cong 
\mathcal{T}_{\lambda\setminus 1^2,GN_{n-1,m-1}},
\]
for each pendant $p$,
% the previously described subset corresponding to $p$ to $\mathcal{T}_{\lambda\setminus 1^2,GN_{n-1,m-1}},$ 
since removing a pendant and corresponding anchor lowers the number of pendants and the number of body vertices by one, \svwj{respectively.}

We can thus count these subsets of tabloids by adding the term
\[
m\sum_{T\in\mathcal{T}_{\lambda\setminus 1^2,GN_{n-1,m-1}}}\mathrm{sgn}(T)=m\xi(\lambda\setminus 1^2,GN_{n-1,m-1})
% m\sum_{T\in\mathcal{T}_{\lambda\setminus 1^2,GN_{n-1,m-1}}}\mathrm{sgn}(T)=m[s_{\lambda\setminus 1^2}]X_{GN_{n-1,m-1}}=m\xi(\lambda\setminus 1^2,GN_{n-1,m-1})
\]
to the sum we are computing (we have a factor of $m$ since there are $m$ pendants).

Denote the set of the remaining tabloids in $\mathcal{T}^P_{\lambda, GN_{n,m}}$ by $\mathcal{S}$. We partition $\mathcal{S}$ such that
\[
\mathcal{S}=\bigsqcup_{H}\mathcal{S}_H,
\]
where the disjoint union spans over all possible heads of tabloids $T\in \mathcal{S}$ and each $\mathcal{S}_H\subseteq \mathcal{S}$ is exactly the subset \svwj{of taboids with head $H$, namely}
\[
\mathrm{hd}(T)=\mathrm{hd}(T')=H
\]
for all $T,T'\in\mathcal{S}_H$.

For any $\mathcal{S}_H$, let $\mathcal{P}_H$ and $\mathcal{U}_H$ respectively denote the set of pendants and the set of body vertices appearing in the tail of every $T\in\mathcal{S}_H$. By Lemma \ref{lemma:tail_cant_be_pend_filled} and the definition of a generalized net, Conditions (I) and (II) of Proposition \ref{prop:rec_algorithm} are satisfied for $\mathcal{P}_H$ and $\mathcal{U}_H$.

Moreover, all $u,u'\in U$ are all adjacent so Condition (i) of Proposition \ref{prop:rec_algorithm} is also satisfied. Lastly, recall $\mathcal{S}$ does not include any tabloids with a pendant in the bottom cell directly below its anchor. Therefore, Condition (ii) of Proposition \ref{prop:rec_algorithm} holds as well. We conclude
\[
\sum_{T\in\mathcal{S}_H}\mathrm{sgn}(T)=0
\qquad\text{for all $H$ so}\qquad
\sum_{T\in\mathcal{S}}\mathrm{sgn}(T)=0.
\]
\end{proof}

% \subsection{A Formula for Special Schur Coefficients of Generalized Nets}
% \label{sect:tailless_coef_gen_nets}

% From Section \ref{sect:rec_rel}, we have a recurrence relation for the Schur coefficients of generalized nets in the case where the coefficients correspond to partitions which end in $1$.
% In this section, 
We now address the case where coefficients correspond to partitions which do not end in $1$.

\begin{lemma}
Assume $[s_{\lambda}]X_{GN_{n,m}}\ne 0$ for some $\lambda=(\lambda_1,\dots,\lambda_k)$ such that $\lambda_k\ne 1$.
Then $n=m$ and $\lambda=(2^n)$.
\label{lemma:only_tailless}
\end{lemma}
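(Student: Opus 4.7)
The plan is to use Corollary~\ref{cor:better_s_form} to translate the hypothesis $[s_\lambda] X_{GN_{n,m}} \neq 0$ into the existence of at least one SRH $GN_{n,m}$-tabloid $T$ of shape $\lambda$, and then extract all the needed information from three elementary counting inequalities on $T$. No sign-reversing involutions or recurrences are needed here — the conclusion is so restrictive that the mere existence of $T$ already forces it.

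First, since $\lambda_k \neq 1$ and $\lambda$ is a partition, every part satisfies $\lambda_i \geq 2$, so $|\lambda| \geq 2k$ where $k = \ell(\lambda)$. Second, let $R$ be the number of rim hooks in $T$. By the special rim hook condition, every rim hook contains at least one cell in the first column of $\lambda$, and distinct rim hooks are disjoint; since the first column has exactly $k$ cells, this yields $R \leq k$. Third, the body of $GN_{n,m}$ is a complete graph, so any stable set — and hence any rim hook of $T$ — contains at most one body vertex. Because all $n$ body vertices must appear somewhere in $T$, this forces $R \geq n$.

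Combining these with $|\lambda| = |V(GN_{n,m})| = n+m$ gives
\[
n + m \;=\; |\lambda| \;\geq\; 2k \;\geq\; 2R \;\geq\; 2n,
\]
so $m \geq n$. Together with $m \leq n$ from Definition~\ref{def:gen_nets}, we conclude $m = n$. Equality must then hold throughout the chain, which forces $k = n$ and $\lambda_i = 2$ for every $i$, that is, $\lambda = (2^n)$.

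There is no substantive obstacle in this argument; the whole proof is a short chain of inequalities. The only point worth verifying carefully is the bound $R \leq k$, which uses the fact that a special rim hook takes at least one cell from the first column and that the first column of $\lambda$ contains exactly $\ell(\lambda)$ cells.
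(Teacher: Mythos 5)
Your proof is correct and follows essentially the same counting argument as the paper: at most one body vertex per rim hook, every rim hook meets the first column, and every row has length at least $2$. The only (harmless) difference is that you bound the number of rim hooks directly and so avoid the paper's use of a pendant-last labeling to place the body vertices in the first column.
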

\begin{proof}
Let $GN_{n,m}$ have a pendant-last labeling. There may be at most one vertex from the body in each rim hook in $T$. Moreover, the vertices from the body must all be positioned at the beginning of their rim hooks since their labels are smaller than that of the pendants. Since every rim hook must intersect the first column, this implies all $n$ anchors and buoys must be in the first column.

In order for the tail to have no cells, every row in $T$ must have at least $2$ cells. Hence, the second column of $T$ must also have $n$ cells. This implies we must have $n$ pendants to fill the second column. There are no more vertices in the graph so $n=m$ and $\lambda=(2^n)$ must both hold.
\end{proof}

% To address the case where the tail of $T\in\mathcal{T}_{GN_{n,m},\lambda}$ has no cells, we compute a formula for the corresponding coefficients, which first requires a few lemmas.

\begin{lemma}
\label{lemma:get_rid_of_singleton}
We have that
\[
[s_{(2^C,1^D)}]X_{GN_{C+D,C-1}\cup P_1}=[s_{(2^{C-1},1^{D+1})}]X_{GN_{C+D,C-1}}
\]
for $C\ge 1, D\ge 0$.
\end{lemma}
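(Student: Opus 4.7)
The plan is to exploit the multiplicativity of the chromatic symmetric function over disjoint unions, combined with the Pieri rule. Since $GN_{C+D,C-1}\cup P_1$ is the disjoint union of $G:=GN_{C+D,C-1}$ with an isolated vertex, we have $X_{G\cup P_1}=X_G\cdot X_{P_1}=X_G\cdot s_{(1)}$. Multiplying a Schur expansion by $s_{(1)}$ corresponds, via Pieri's rule, to summing over all single-cell additions, so extracting the coefficient of $s_\lambda$ yields
\[
[s_\lambda]X_{G\cup P_1}=\sum_{\mu}[s_\mu]X_G,
\]
where $\mu$ ranges over partitions obtained from $\lambda$ by removing a single removable corner.

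Applied with $\lambda=(2^C,1^D)$, I would identify the removable corners: the corner at position $(C,2)$ (always) yields $\mu_1=(2^{C-1},1^{D+1})$, and the corner at position $(C+D,1)$ (only when $D\ge 1$) yields $\mu_2=(2^C,1^{D-1})$. Thus the desired identity reduces to showing that the contribution from $\mu_2$ vanishes, i.e., $[s_{(2^C,1^{D-1})}]X_G=0$ for all $D\ge 1$; when $D=0$, the $\mu_2$ term is absent and the identity is immediate.

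To establish this vanishing I would invoke Corollary \ref{cor:better_s_form} and argue that $\mathcal{T}_{(2^C,1^{D-1}),G}$ is empty, via a pigeonhole argument on the body clique. The graph $G=GN_{C+D,C-1}$ has $C+D$ body vertices forming a clique $K_{C+D}$, so every rim hook of an SRH $G$-tabloid (being a stable set) contains at most one body vertex. On the other hand, every rim hook in a special rim hook tabloid occupies at least one cell of the first column, so the total number of rim hooks is at most $\ell(\mu)=C+D-1$ for $\mu=(2^C,1^{D-1})$, leaving no room to accommodate all $C+D$ body vertices. This pigeonhole step is the only non-routine ingredient; both Pieri's rule and the product formula for chromatic symmetric functions of disjoint unions are standard, so I do not anticipate a serious obstacle. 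The alternative, more combinatorial approach would be to construct a sign-preserving bijection $\mathcal{T}_{(2^C,1^D),G\cup P_1}\to\mathcal{T}_{(2^{C-1},1^{D+1}),G}$ directly, but this is less transparent than the Pieri-rule argument above.
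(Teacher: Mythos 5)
Your proof is correct, but it takes a genuinely different route from the paper. The paper works entirely inside the SRH $G$-tabloid framework: with a pendant-last labeling and the isolated vertex $x$ labeled maximally, the first column of any tabloid in $\mathcal{T}_{(2^C,1^D),GN_{C+D,C-1}\cup P_1}$ is forced to contain exactly the $C+D$ body vertices; a sign-reversing involution (toggling between an $N$-step and an $E$-step into $x$) kills every tabloid in which $x$ is not the bottom cell of the second column, and deleting that one cell from the survivors is a sign-preserving bijection onto $\mathcal{T}_{(2^{C-1},1^{D+1}),GN_{C+D,C-1}}$. You instead work at the level of symmetric functions: $X_{G\cup P_1}=X_G\cdot s_{(1)}$ plus the (dual) Pieri rule gives $[s_{(2^C,1^D)}]X_{G\cup P_1}=[s_{(2^{C-1},1^{D+1})}]X_G+[s_{(2^C,1^{D-1})}]X_G$ (the second term present only for $D\ge 1$), and your pigeonhole argument correctly shows the second term vanishes: $(2^C,1^{D-1})$ has only $C+D-1$ rows, hence at most $C+D-1$ rim hooks, while the $C+D$ pairwise adjacent body vertices of $GN_{C+D,C-1}$ each require their own rim hook, so $\mathcal{T}_{(2^C,1^{D-1}),GN_{C+D,C-1}}=\emptyset$ and Corollary \ref{cor:better_s_form} applies. (This vanishing argument is the same counting idea the paper uses in Lemmas \ref{lemma:tail_cant_be_pend_filled} and \ref{lemma:only_tailless}, so it is very much in the paper's spirit even though the lemma itself is proved differently.) What your approach buys is generality and transparency: the identity $[s_\lambda]X_{G\cup P_1}=\sum_{\mu}[s_\mu]X_G$ over single-corner removals holds for any graph and any shape, and reduces the lemma to a clean vanishing statement. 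What the paper's approach buys is uniformity of method --- it stays within the tabloid calculus that the rest of Section 4 is built on, which is the technique the authors are showcasing.
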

\begin{proof}
We label $GN_{C+D,C-1}\cup P_1$ with a pendant-last labeling and assign the degree $0$ vertex (which we call $x$) the largest label. For any $T\in\mathcal{T}_{\lambda,GN_{C+D,C-1}\cup P_1}$, we have that the first column must be completely filled with the $C+D$ vertices from the body (since these vertices are each in distinct rim hooks and labeled minimally). Moreover, this implies there are no $N$-steps in the first column.

Consider any tabloid for which $x$ is not in the \svwj{bottom} cell in the second column. Since $x$ is nonadjacent to every other vertex and labeled maximally, an $N$-step from the vertex below $x$ and an $E$-step from the vertex to the left of $x$ are both permissible.

Accordingly, we map all tabloids with an $N$-step to $x$ to the otherwise identical tabloids with an $E$-step to $x$ and vice versa. This map is a sign-reversing involution (on the tabloids for which $x$ is not in the \svwj{bottom} cell in the second column) since an $N$-step is always added or removed.

Thus, it remains to count the tabloids which have $x$ in the \svwj{bottom} cell in the second column. Since $x$ is labeled maximally, there is no $N$-step up from $x$ so there must be an $E$-step to $x$ from the vertex to the left of $x$.

Consider the map which sends these tabloids to tabloids in $\mathcal{T}_{(2^{C-1},\svwj{1^{D+1}}),GN_{C+D,C-1}}$ (under a pendant-last labeling) by removing the cell containing $x$. Since no $N$-steps are added or removed, and since every $T\in \mathcal{T}_{(2^{C-1},\svwj{1^{D+1}}),GN_{C+D,C-1}}$ is in the image of the map, this is a sign-preserving bijection. We conclude
\[
[s_{(2^C,1^D)}]X_{GN_{C+D,C-1}\cup P_1}=[s_{(2^{C-1},1^{D+1})}]X_{GN_{C+D,C-1}}.
\]
\end{proof}

\begin{example}
Below, we illustrate examples of the two bijections in the proof of Lemma \ref{lemma:get_rid_of_singleton}, in the case where $C=3,D=3$.
\begin{figure}[H]
\centering
\begin{tikzpicture}[scale=.75]
%first diagram
 \node[scale=.75] at (1,0) {\ydiagram{2,2,2,1,1,1}};
  \node[scale=.75] at (.65,1.6) [] (a){$*$};
  \node[scale=.75] at (1.3,1.6) [] (c){$*$};
  \node[scale=.75] at (.65,.95) [] (d){$*$};
  \node[scale=.75] at (.65,.3) [] (f){$*$};
  \node[scale=.75] at (.65,-.35) [] (b){$*$};
  \node[scale=.75] at (.65,-1) [] (e){$*$};
  \node[scale=.75] at (1.3,.95) [] (x){$x$};
  \node[scale=.75] at (1.3,.3) [] (h){$*$};
  \node[scale=.75] at (.65,-1.65) [] (i){$*$};
  \draw [thick](.85,.95)--(1.1,.95);
%second diagram
\node[scale=.75] at (2.5,0) {$\longleftrightarrow$};
 \node[scale=.75] at (4,0) {\ydiagram{2,2,2,1,1,1}};
  \node[scale=.75] at (3.65,1.6) [] (a){$*$};
  \node[scale=.75] at (4.3,1.6) [] (c){$*$};
  \node[scale=.75] at (3.65,.95) [] (d){$*$};
  \node[scale=.75] at (3.65,.3) [] (f){$*$};
  \node[scale=.75] at (3.65,-.35) [] (b){$*$};
  \node[scale=.75] at (3.65,-1) [] (e){$*$};
  \node[scale=.75] at (4.3,.95) [] (x){$x$};
  \node[scale=.75] at (4.3,.3) [] (h){$*$};
  \node[scale=.75] at (3.65,-1.65) [] (i){$*$};
  \draw [thick] (4.3,.5)--(4.3,.75);
%third diagram
 \node[scale=.75] at (7,0) {\ydiagram{2,2,2,1,1,1}};
  \node[scale=.75] at (6.65,1.6) [] (a){$*$};
  \node[scale=.75] at (7.3,1.6) [] (c){$*$};
  \node[scale=.75] at (6.65,.95) [] (d){$*$};
  \node[scale=.75] at (6.65,.3) [] (f){$*$};
  \node[scale=.75] at (6.65,-.35) [] (b){$*$};
  \node[scale=.75] at (6.65,-1) [] (e){$*$};
  \node[scale=.75] at (7.3,.95) [] (x){$*$};
  \node[scale=.75] at (7.3,.3) [] (h){$x$};
  \node[scale=.75] at (6.65,-1.65) [] (i){$*$};
    \draw [thick](6.85,.3)--(7.1,.3);
%fourth diag
\node[scale=.75] at (8.5,0) {$\longleftrightarrow$};
 \node[scale=.75] at (10,0) {\ydiagram{2,2,1,1,1,1}};
  \node[scale=.75] at (9.65,1.6) [] (a){$*$};
  \node[scale=.75] at (10.3,1.6) [] (c){$*$};
  \node[scale=.75] at (9.65,.95) [] (d){$*$};
  \node[scale=.75] at (9.65,.3) [] (f){$*$};
  \node[scale=.75] at (9.65,-.35) [] (b){$*$};
  \node[scale=.75] at (9.65,-1) [] (e){$*$};
  \node[scale=.75] at (10.3,.95) [] (x){$*$};
  \node[scale=.75] at (9.65,-1.65) [] (i){$*$};
\end{tikzpicture}
% \caption{An illustration of the map in the proof of Lemma \ref{lemma:get_rid_of_singleton}.}
% \label{fig:canceling _detached_pendant}
\end{figure}
On the left, we observe two types of tabloids which are canceled out by the first sign-reversing involution in the proof. On the right, we see how tabloids in \\
$\mathcal{T}_{(2^3,1^3),GN_{6,2}\cup P_1}$ with $x$ in the bottom cell of the second column are mapped to tabloids in $\mathcal{T}_{(2^2,1^4),GN_{6,2}}$.
\end{example}

We now fix some notation for a certain type of Schur coefficient which satisfies \svw{the convenient recurrence relation that follows.} 

\begin{notation}
\label{notation:spec_coeff}
We let
\[f(C,D)=\begin{cases} [s_{(2^C,1^D)}]X_{GN_{C+D,C}}&\qquad\text{if }C,D\ge 0\\
0 &\qquad\text{otherwise}.
\end{cases}\] We note that
\[
\ell((2^C,1^D))=C+D=\#\text{vertices in the body of }GN_{C+D,C}.
\]
This equality makes \svwj{it easier to count} the tabloids  with a pendant-last labeling.

\end{notation}

\begin{lemma}
\label{lemma:rec_for_spec_coeff}
We have that
\[
f(C,D)=Cf(C-1,D)+Df(C,D-1)
\]
for $C,D\ge 1$.
\end{lemma}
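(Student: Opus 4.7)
The plan is to condition on what appears in the bottom cell of an SRH $GN_{C+D,C}$-tabloid of shape $(2^C, 1^D)$ and reduce to smaller instances of $f$. I would adopt the pendant-last labeling throughout, so that body vertices have strictly smaller labels than pendants.

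My first task is to establish a strong structural constraint: in every $T \in \mathcal{T}_{(2^C, 1^D), GN_{C+D, C}}$, the first column contains exactly the $C+D$ body vertices. The proof is a cell count. The body vertices of $GN_{C+D,C}$ are pairwise adjacent, so they sit in $C + D$ distinct rim hooks. Each rim hook intersects the first column by the SRH condition, so there are at least $C + D$ rim hooks; but the first column has exactly $C + D$ cells, so equality forces exactly one rim hook per first-column cell. Since each rim hook is read in increasing order starting from its first-column cell, and body labels are below pendant labels, every first-column cell must hold its rim hook's body vertex. As a direct consequence, the bottom cell $(C+D, 1)$ holds a body vertex, and its rim hook must be a single cell: an $E$-step is impossible (row $C+D$ has length one), and an $N$-step to $(C+D-1, 1)$ is blocked because the adjacent body vertex above would violate the stable-set condition. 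In particular, removing the bottom cell is sign-preserving.

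With this in place, I would split the tabloids according to whether the body vertex in the bottom cell is an anchor (with $C$ choices) or a buoy (with $D$ choices). Removing a bottom anchor detaches its pendant and yields a sign-preserving bijection onto $\mathcal{T}_{(2^C, 1^{D-1}), GN_{C+D-1, C-1} \cup P_1}$; removing a bottom buoy yields a sign-preserving bijection onto $\mathcal{T}_{(2^C, 1^{D-1}), GN_{C+D-1, C}}$. Summing signs via Corollary \ref{cor:better_s_form} gives
\[
f(C, D) = C \cdot [s_{(2^C, 1^{D-1})}]X_{GN_{C+D-1, C-1} \cup P_1} + D \cdot [s_{(2^C, 1^{D-1})}]X_{GN_{C+D-1, C}}.
\]
The second term is $D \cdot f(C, D-1)$ by definition, while Lemma \ref{lemma:get_rid_of_singleton} (applied with its $D$ replaced by $D-1$) rewrites the first term as $C \cdot [s_{(2^{C-1}, 1^D)}]X_{GN_{C+D-1, C-1}} = C \cdot f(C-1, D)$, finishing the argument.

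The main obstacle is the structural step forcing the first column to consist entirely of body vertices --- it requires simultaneously invoking the SRH condition, the stable-set constraint, the partial-order reading rule, and the cell count, and only when all four interact correctly does the column get pinned down. Once that fact is secured, the rest of the argument is a straightforward bijective cleanup plus one invocation of the preceding lemma.
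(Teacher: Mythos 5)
Your proof is correct and follows essentially the same route as the paper's: both use the pendant-last labeling to pin the body vertices to the first column, condition on whether the bottom cell holds an anchor ($C$ choices) or a buoy ($D$ choices), remove that singleton rim hook via a sign-preserving bijection, and then apply Lemma \ref{lemma:get_rid_of_singleton} to absorb the isolated vertex $P_1$ in the anchor case. Your write-up is simply more explicit than the paper's about why the cell count forces exactly one rim hook per first-column cell and why the bottom rim hook has length one.
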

\begin{proof}
We count tabloids using a pendant-last labeling. All $C+D$ vertices from the body of $GN_{C+D,C}$ must be in separate rim hooks and are labeled minimally. Therefore, they all lie in the first column (which accordingly has no $N$-steps).

Consider the vertex in the \svwj{bottom} cell in the first column, which we call $x$. Since $D\ge 1$, this vertex is in the tail of the diagram and is thus in a rim hook of length $1$. If $x$ is an anchor, we have a sign-preserving bijection
\[
\mathcal{T}_{(2^C,1^D),GN_{C+D,C}}^{x}\to \mathcal{T}_{(2^C,1^{D-1}),GN_{C+D-1,C-1}\cup P_1},
\]
obtained by removing the \svwj{bottom} cell in the first column. Indeed, removing an anchor detaches one of the pendant vertices, yielding a generalized net with one less pendant and one less vertex in the body, along with \svwj{an isolated vertex.}

Similarly, if $x$ is a buoy, we have a sign-preserving bijection
\[
\mathcal{T}_{(2^C,1^D),GN_{C+D,C}}^{x}\to \mathcal{T}_{(2^C,1^{D-1}),GN_{C+D-1,C}},
\]
obtained by removing the \svwj{bottom} cell in the first column. Removing a buoy results in a generalized net with one less vertex in the body.

Since there are $C$ anchors and $D$ buoys, these bijections result in the relation
\[
f(C,D)=C[s_{(2^C,1^{D-1})}]X_{GN_{C+D-1,C-1}\cup P_1}+D[s_{(2^C,1^{D-1})}]X_{GN_{C+D-1,C}}.
\]
Applying Lemma \ref{lemma:get_rid_of_singleton}, we obtain
\begin{align*}
f(C,D)&=C[s_{(2^{C-1},1^{D})}]X_{GN_{C+D-1,C-1}}+D[s_{(2^C,1^{D-1})}]X_{GN_{C+D-1,C}}\\
&=Cf(C-1,D)+Df(C,D-1).
\end{align*}
\end{proof}

We now prove that the coefficients $[s_{(2^n)}]X_{GN_{n,n}}$ satisfy a nonnegative formula.

\begin{proposition}
\label{prop:regular_tailless_form}
We have that
\[
f(n,0)=\begin{cases} n! & \text{if $n$ is even}\\
0 & \text{otherwise}
\end{cases}
\]
for $n\ge 1$.
\end{proposition}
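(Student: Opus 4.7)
The plan is to compute $f(n,0)=\sum_{T}\mathrm{sgn}(T)$ directly, where $T$ ranges over SRH $GN_{n,n}$-tabloids of shape $(2^n)$, first by characterizing such tabloids structurally and then by evaluating the resulting alternating sum via generating functions.

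First, I would set up the structure using a pendant-last labeling. Because the anchors $1,\ldots,n$ form a clique, each rim hook contains a single anchor, so there are exactly $n$ rim hooks, one per row of column $1$. A careful analysis of which sequential removals are valid---starting from the observation that $(n,1)$ and $(n,2)$ must lie in the same rim hook, since otherwise removing row $n$ would leave a non-partition shape---shows by induction from the bottom that each rim hook at $(i,1)$ is either a singleton or has its column-$2$ cells forming a contiguous interval $[j,i]$ extending upward from $(i,1)$. Thus the column-$2$ cells of the extended rim hooks form an interval partition of $[n]$, putting the SRH structures of shape $(2^n)$ into bijection with compositions $(\ell_1,\ldots,\ell_s)\models n$. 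Totalling $|C_i|-1$ $N$-steps over the $s$ extended rim hooks yields $n-s$ total $N$-steps, so the sign of any tabloid with this structure is $(-1)^{n-s}$.

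Next, I would count the fillings for a fixed composition $(\ell_1,\ldots,\ell_s)$. Distributing pendants among the strips---the increasing reading order forces the pendant indices within each strip to decrease in row order---amounts to choosing an ordered set partition of $[n]$ into blocks of sizes $\ell_\alpha$, giving $\binom{n}{\ell_1,\ldots,\ell_s}$ choices. For each, the anchor permutation $\sigma\in S_n$ is unconstrained except at each strip base $(i_\alpha,1)$: stability requires $\sigma(i_\alpha)\notin V_\alpha$, the set of pendant indices in strip $\alpha$. Inclusion-exclusion over subsets of strips yields $D(\ell_1,\ldots,\ell_s)=\sum_{r=0}^{s}(-1)^r e_r(\ell_1,\ldots,\ell_s)(n-r)!$ valid $\sigma$, where $e_r$ denotes the elementary symmetric polynomial. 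Hence
\[
f(n,0)=\sum_{(\ell_1,\ldots,\ell_s)\models n}(-1)^{n-s}\frac{n!}{\prod_\alpha\ell_\alpha!}\sum_{r=0}^{s}(-1)^r e_r(\ell_1,\ldots,\ell_s)(n-r)!.
\]

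Finally, I would evaluate this sum. Using $(-1)^{n-s}/\prod_\alpha\ell_\alpha!=\prod_\alpha(-1)^{\ell_\alpha-1}/\ell_\alpha!$ and interchanging the order of summation, the inner sum over compositions factors. Parts outside the $r$-subset defining $e_r$ contribute the generating function $A(x)=\sum_{\ell\ge1}(-1)^{\ell-1}x^\ell/\ell!=1-e^{-x}$, while parts inside contribute $B(x)=\sum_{\ell\ge1}(-1)^{\ell-1}x^\ell/(\ell-1)!=xe^{-x}$. Summing over composition length $s\ge r$ and over $r$-subsets gives
\[
\sum_{s\ge r}\binom{s}{r}A(x)^{s-r}B(x)^r=\frac{B(x)^r}{(1-A(x))^{r+1}}=\frac{x^r e^{-rx}}{e^{-(r+1)x}}=x^r e^x,
\]
so the coefficient of $x^n$ here is $1/(n-r)!$. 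Substituting back,
\[
f(n,0)=n!\sum_{r=0}^{n}(-1)^r,
\]
which equals $n!$ if $n$ is even and $0$ if $n$ is odd. The main obstacle will be establishing the structural claim in the first step---that rim hooks in SRH tabloids of $(2^n)$ extend only upward, so that SRH structures correspond exactly to interval partitions of $[n]$---since carefully ruling out downward-extending rim hooks requires tracking when the partially removed shape remains a valid partition.
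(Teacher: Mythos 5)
Your proposal is correct, and it takes a genuinely different route from the paper. You enumerate all SRH $GN_{n,n}$-tabloids of shape $(2^n)$ in closed form: the structural classification (each rim hook meets column~$1$ in exactly one cell since the $n$ anchors are mutually adjacent and labeled minimally, extends only upward in column~$2$ because a cell $(i,1)$ cannot share a strip with $(i+1,2)$, and the resulting column-$2$ intervals tile $[n]$) is sound and puts the underlying SRH structures in bijection with compositions of $n$; the filling count $\binom{n}{\ell_1,\dots,\ell_s}\sum_r(-1)^r e_r(\ell)(n-r)!$ is right, since the forbidden anchor sets at the strip bases are disjoint; and the exponential generating function evaluation correctly collapses the whole sum to $n!\sum_{r=0}^n(-1)^r$ (I checked it reproduces $f(1,0)=0$ and $f(2,0)=2$). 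The paper instead argues by induction on $n$: it peels off only the bottom rim hook to obtain $f(n,0)=n\sum_{i=1}^{n-1}\binom{n-1}{i}(-1)^{i-1}f(n-i-1,i)$ via Lemma~\ref{lemma:get_rid_of_singleton}, and then uses the Pascal-type recurrence $f(C,D)=Cf(C-1,D)+Df(C,D-1)$ of Lemma~\ref{lemma:rec_for_spec_coeff} to telescope this down to $f(n,0)=n(n-1)f(n-2,0)$. The paper's route is shorter because it leans on machinery already built for the Schur-positivity argument, while yours is self-contained, avoids Lemmas~\ref{lemma:get_rid_of_singleton} and~\ref{lemma:rec_for_spec_coeff} entirely, and produces an explicit refinement of the coefficient as an alternating sum over compositions; the price is that you must carry out the structural classification and the generating-function manipulation carefully, and the one step you flag as delicate (ruling out downward-extending rim hooks) does need the observation that $(i,1)$ lies on the southeast boundary only after $(i+1,2)$ has been removed, which is exactly what forbids $(i+1,2)$ from belonging to the strip based at $(i,1)$.
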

\begin{proof}
We have that
\[
f(n,0)=[s_{(2^n)}]X_{GN_{n,n}}.
\]
The desired formula holds for $n=1$ since there are no valid SRH $GN_{1,1}$-tabloids of shape $(2)$. Likewise, the formula holds for $n=2$ since there are two horizontal rim hooks of length $2$ which may be ordered $2$ ways. We proceed by induction and assume the formula holds for $f(n-2,0)$ for some $n\ge 3$.

We count tabloids using a pendant-last labeling so that, as in the proof of Lemma \ref{lemma:only_tailless}, the first column must be filled with all the vertices from the body of the graph. For any $T\in \mathcal{T}_{(2^n),GN_{n,n}}$, we consider the \svwj{bottom} rim hook which necessarily starts with an $E$-step then includes $N$-steps up the second column spanning $i$ cells where $i$ ranges from $1$ to $n-1$. Note this rim hook cannot span all $n$ cells in the second column since there are no stable $n+1$-sets in $GN_{n,n}$.

For each choice of a \svwj{bottom} rim hook spanning $i$ cells in the second column, we consider a mapping in which this rim hook is removed, leaving the rest of the tabloid unchanged. In each case, this results in a bijection from the subset of tabloids with the given rim hook to the set
\[\mathcal{T}_{(\svw{2^{n-i},1^{i-1}}),GN_{n-1,n-i-1}\cup P_1}.\]
Indeed, each such map can be inverted by simply adding the given rim hook back to the tabloid. Under such a mapping, the sign of the tabloid changes by a factor of $(-1)^{i-1}$ since there are $i-1$ $N$-steps being removed. There are $n$ choices for the anchor at the beginning of the rim hook, then $\binom{n-1}{i}$ choices for the $i$ pendants in the rim hook (their order is determined by the labeling and the pendant corresponding to the anchor is excluded).

Accordingly, via these bijections, we obtain the formula
\begin{align*}
f(n,0)=n\sum_{i=1}^{n-1}\binom{n-1}{i}(-1)^{i-1}[s_{(2^{n-i},1^{i-1})}]X_{GN_{n-1,n-i-1}\cup P_1}.
\end{align*}
By Lemma \ref{lemma:get_rid_of_singleton}, we can convert this to
\begin{align}
\begin{split}
\label{eq:bot_hook_remov_formula}
% f(n,0)&=n\sum_{i=1}^{n-1}\binom{n-1}{i}(-1)^{i-1}[s_{(2^{n-i-1},1^{i})}]X_{GN_{n-1,n-i-1}}\\
f(n,0)&=n\sum_{i=1}^{n-1}\binom{n-1}{i}(-1)^{i-1}f(n-i-1,i).\\
% &=n\sum_{i=1}^{n-1}\binom{n-1}{i}(-1)^{i-1}f(n-i-1,i).
\end{split}
\end{align}
We then have
\begin{align*}
% f(n,0)&=n(-1)^{n-2}f(0,n-1)+n\sum_{i=1}^{n-2}\binom{n-1}{i}(-1)^{i-1}f(n-i-1,i)\\
% &=(-1)^{n}n!+n\sum_{i=1}^{n-2}\binom{n-1}{i}(-1)^{i-1}f(n-i-1,i)
% \end{align*}
% since $f(0,n-1)=(n-1)!$ as it simply counts tabloids of shape $(1^{n-1})$ with no $N$-steps. We then apply Lemma \ref{lemma:rec_for_spec_coeff} and Equation \ref{eq:bot_hook_remov_formula} to obtain
% \begin{align*}
% f(n,0)&=(-1)^{n}n!+n\sum_{i=1}^{n-2}\binom{n-1}{i}(-1)^{i-1}\big((n-i-1)f(n-i-2,i)+if(n-i-1,i-1)\big)\\
% &=(-1)^{n}n!+n(n-1)\sum_{i=1}^{n-2}(-1)^{i-1}\left(\frac{(n-2)!f(n-i-2,i)}{(n-2-i)!i!}+\frac{(n-2)!f(n-i-1,i-1)}{(n-1-i)!(i-1)!}\right)\\
% &=(-1)^{n}n!+nf(n-1,0)+n(n-1)\sum_{j=0}^{n-3}\frac{(n-2)!}{(n-2-j)!j!}(-1)^{j}f(n-j-2,j)\\
% &=(-1)^{n}n!+nf(n-1,0)+n(n-1)f(n-2,0)\\
% &-n(n-1)\sum_{j=1}^{n-2}\binom{n-2}{j}(-1)^{j-1}f(n-j-2,j)+n(n-1)(-1)^{n-3}f(0,n-2)\\
% &=(-1)^{n}n!+nf(n-1,0)+n(n-1)f(n-2,0)-nf(n-1,0)+(-1)^{n-1}n!\\
% % &=n(n-1)f(n-2,0)\\
% &=\begin{cases} n(n-1)\cdot (n-2)! &\text{if $n$ is even}\\
% n(n-1)\cdot 0 &\text{if $n$ is odd}
% \end{cases}
f(n,0)&=n(-1)^{n-2}f(0,n-1)+n\sum_{i=1}^{n-2}\binom{n-1}{i}(-1)^{i-1}f(n-i-1,i)\\
&=(-1)^{n}n!+n\sum_{i=1}^{n-2}\binom{n-1}{i}(-1)^{i-1}f(n-i-1,i)
\end{align*}
since $f(0,n-1)=(n-1)!$ as it simply counts tabloids of shape $(1^{n-1})$ with no $N$-steps. We then apply Lemma \ref{lemma:rec_for_spec_coeff} and Equation \ref{eq:bot_hook_remov_formula} to obtain
\begin{align*}
f(n,0)&=(-1)^{n}n!+n\sum_{i=1}^{n-2}\binom{n-1}{i}(-1)^{i-1}\big((n-i-1)f(n-i-2,i)+if(n-i-1,i-1)\big)\\
% f(n,0)&=(-1)^{n}n!+n\sum_{i=1}^{n-2}\binom{n-1}{i}(-1)^{i-1}(n-i-1)f(n-i-2,i)\\
% &+n\sum_{i=1}^{n-2}\binom{n-1}{i}(-1)^{i-1}if(n-i-1,i-1)\\
&=(-1)^{n}n!+n(n-1)\sum_{i=1}^{n-2}\frac{(n-2)!}{(n-2-i)!i!}(-1)^{i-1}f(n-i-2,i)\\
&+n(n-1)\sum_{i=1}^{n-2}\frac{(n-2)!}{(n-1-i)!(i-1)!}(-1)^{i-1}f(n-i-1,i-1)\\
&=(-1)^{n}n!+nf(n-1,0)+n(n-1)\sum_{j=0}^{n-3}\frac{(n-2)!}{(n-2-j)!j!}(-1)^{j}f(n-j-2,j)\\
&=(-1)^{n}n!+nf(n-1,0)+n(n-1)f(n-2,0)\\
&-n(n-1)\sum_{j=1}^{n-2}\binom{n-2}{j}(-1)^{j-1}f(n-j-2,j)+n(n-1)(-1)^{n-3}f(0,n-2)\\
&=(-1)^{n}n!+nf(n-1,0)+n(n-1)f(n-2,0)-nf(n-1,0)+(-1)^{n-1}n!\\
&=n(n-1)f(n-2,0)\\
&=\begin{cases} n(n-1)\cdot (n-2)! &\text{if $n$ is even}\\
n(n-1)\cdot 0 &\text{if $n$ is \svwj{odd,}}
\end{cases}
\end{align*}
so the desired formula holds by induction.
\end{proof}

% \subsection{Generalized Nets are Schur-Positive}

% In this section, we apply the results of Sections \ref{sect:rec_rel} and \ref{sect:tailless_coef_gen_nets} to 
We now prove the main theorem of this section.

\begin{theorem}
\label{thm:gen_nets_are_s_pos}
All generalized nets $GN_{n,m}$ are Schur-positive for $n\ge 1$ and $n\ge m \ge 0$.
\end{theorem}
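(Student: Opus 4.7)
The plan is to prove Schur-positivity by strong induction on $n + m$, the total number of vertices of $GN_{n,m}$, using the machinery assembled in this section. The base case $n + m = 1$ is the single vertex $GN_{1,0}$ with $X_{GN_{1,0}} = s_1$. For the inductive step, I would fix a partition $\lambda = (\lambda_1, \dots, \lambda_k)$ of $n + m$ and split into cases according to whether $\lambda_k = 1$.

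If $\lambda_k \neq 1$, Lemma~\ref{lemma:only_tailless} forces $[s_\lambda] X_{GN_{n,m}} = 0$ unless $n = m$ and $\lambda = (2^n)$, in which case the coefficient equals $f(n,0)$; this is either $n!$ or $0$ by Proposition~\ref{prop:regular_tailless_form}, hence nonnegative. If $\lambda_k = 1$ and $m = 0$, then $GN_{n,0} = K_n$ and $X_{K_n} = n!\, s_{(1^n)}$, which is Schur-positive. In the remaining case $\lambda_k = 1$ and $m \ge 1$, Proposition~\ref{prop:rec_for_most_coef} gives
\begin{align*}
[s_\lambda] X_{GN_{n,m}} &= m\, [s_{\lambda \setminus 1}] X_{GN_{n-1,m-1} \cup P_1} + (n-m)\, [s_{\lambda \setminus 1}] X_{GN_{n-1,m}} \\
&\quad + m\, [s_{\lambda \setminus 1^2}] X_{GN_{n-1,m-1}}.
\end{align*}
Each generalized net on the right has strictly fewer than $n + m$ vertices, so by the inductive hypothesis $X_{GN_{n-1,m-1}}$ and (when defined) $X_{GN_{n-1,m}}$ are Schur-positive; the convention $\xi \equiv 0$ for ill-defined graphs or partitions handles the boundary subcases where $n' < m'$ or $\lambda$ lacks enough trailing $1$'s. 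For the disjoint-union term, $X_{GN_{n-1,m-1} \cup P_1} = X_{GN_{n-1,m-1}} \cdot s_1$ is Schur-positive by Pieri's rule. Thus every summand on the right-hand side is nonnegative, and $[s_\lambda] X_{GN_{n,m}} \ge 0$, closing the induction.

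Essentially all of the combinatorial work has already been absorbed into Proposition~\ref{prop:rec_algorithm} and Proposition~\ref{prop:regular_tailless_form}, so no substantial obstacle remains. The only point requiring care is bookkeeping at the three edge regimes: partitions with $\lambda_k \neq 1$ (handled via Lemma~\ref{lemma:only_tailless} and Proposition~\ref{prop:regular_tailless_form}), the complete-graph case $m = 0$ (for which the recurrence is not stated, but for which $X_{K_n}$ is explicit), and the automatic vanishing of terms in which the indicated graph or partition fails to exist. Once these three regimes are peeled off, the inductive step reduces to reading the nonnegativity off the recurrence.
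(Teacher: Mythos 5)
Your proof is correct and follows essentially the same route as the paper's: the case $\lambda_k\ne 1$ is dispatched by Lemma~\ref{lemma:only_tailless} together with Proposition~\ref{prop:regular_tailless_form}, and the case $\lambda_k=1$ by reading nonnegativity off the recurrence of Proposition~\ref{prop:rec_for_most_coef} under an induction on the number of vertices. The only divergence is in the base cases -- the paper invokes Gasharov's theorem for $m\le 2$ (these being claw-free incomparability graphs), whereas you start from $K_n$ at $m=0$ and push the recurrence all the way down to $m=1$; this works, but note that at $GN_{1,1}$ the recurrence produces terms in $GN_{0,0}$, which must be read as the empty graph (rather than as ``not properly defined,'' which would make the recurrence return $0$ instead of the true value $2$ for $[s_{(1,1)}]X_{GN_{1,1}}$) -- a boundary subtlety the paper's choice of base cases deliberately sidesteps.
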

\begin{proof}
We proceed by induction on the number of vertices.

If $m=0$, $1$, or $2$, $GN_{n,m}$ is a claw-free incomparability graph, as discussed in \cite{GebSag01}. Thus, these graphs are Schur-positive by Theorem \ref{thm:claw_free_incs}.

Assume $GN_{n,m-1}$ are Schur-positive for all $n\ge m-1$ for some $m\ge 3$. We first show $GN_{m,m}$ is Schur-positive. Consider any $\lambda=(\lambda_1,\dots,\lambda_k)$ such that $[s_{\lambda}]X_{GN_{m,m}}\ne 0$. If $\lambda_k\ne 1$, by Lemma \ref{lemma:only_tailless}, $\lambda=(2^m)$. Then, by Proposition \ref{prop:regular_tailless_form},
\[
[s_{(2^m)}]X_{GN_{m,m}}> 0.
\]Otherwise, \svwj{we} may assume $\lambda_k=1$. By Proposition \ref{prop:rec_for_most_coef},
\[
[s_{\lambda}]X_{GN_{m,m}}=m\xi(\lambda\setminus 1,GN_{m-1,m-1}\cup P_1)+m\xi(\lambda\setminus 1^2,GN_{m-1,m-1})\ge 0
\]
where the righthand side is nonnegative by the inductive hypothesis (note disjoint unions of Schur-positive graphs are Schur-positive by Prop. 2.3 in \cite{stan_95}).

% Thus, $GN_{m,m}$ is Schur-positive.

Assume $GN_{n-1,m}$ is Schur-positive for some $n\ge m+1$. Consider any $\lambda=(\lambda_1,\dots,\lambda_k)$ such that $[s_{\lambda}]X_{GN_{n,m}}\ne 0$. By Lemma \ref{lemma:only_tailless}, $\lambda_k=1$ since $n\ne m$. We have
\begin{align*}
[s_{\lambda}]X_{GN_{n,m}}&=m\xi(\lambda\setminus 1,GN_{n-1.m-1}\cup P_1)+(n-m)\xi(\lambda\setminus 1,GN_{n-1,m})\\
&+m\xi(\lambda\setminus 1^2,GN_{n-1,m-1})\ge 0
% &\ge 0
\end{align*}
by Proposition \ref{prop:rec_for_most_coef} and \svwj{by} the two inductive hypotheses, \svw{and we are done.}\end{proof}

% Since generalized nets are claw-free, the prior theorem makes progress toward Conjecture \ref{conj:claw_free_conjecture} by confirming these graphs are Schur-positive.

\section{Generalized Spiders}
\label{sect:gen_spiders}

In this section, we address a larger family of graphs known as generalized spiders, which includes all generalized nets. We employ Proposition \ref{prop:rec_algorithm} to derive a recurrence relation for the Schur coefficients of these graphs.

\begin{definition}
Let $\lambda=(\lambda_1,\dots,\lambda_k)$ be a partition. A \textit{generalized spider} $GS_{n,\lambda}$ for $\svwj{n\ge 3}, n\ge k\ge 0$, is a complete graph $K_n$ with paths of \svwj{lengths} $\lambda_1,\dots,\lambda_k$ appended to distinct vertices in the complete graph. As with generalized nets, we refer to degree $n-1$ vertices as \textit{buoys}, degree $n$ vertices as \textit{anchors}, and the subgraph consisting of all buoys and anchors as the \textit{body}.
% As with spiders, we refer to the appended paths as \textit{legs}.
\end{definition}

\begin{example}\,
\svwj{The following generalized spider has 3 anchors and 2 buoys together giving 5 vertices in the body.}
% The generalized spider $GS_{5,(4,2,1)}$ is depicted below.
\begin{figure}[H]
\centering
\begin{tikzpicture}[scale=.75]
%% vertices
\node[] at (-2,1.5) {$GS_{5,(4,2,1)}=$};
\node[circle,draw,scale=.5,fill=black] at (1,1.5) (2) {};
\node[circle,draw,scale=.5,fill=black] at (1.5,.5) (3) {};
\node[circle,draw,scale=.5,fill=black] at (-.5,.5) (5) {};
\node[circle,draw,scale=.5,fill=black] at (0,1.5) (1) {};
\node[circle,draw,scale=.5,fill=black] at (.5,0) (4) {};
\node[circle,draw,scale=.5,fill=black] at (1.75,2.25) (7) {};
% \node[circle,draw,scale=.75] at (2.75,2.25) (8) {$8$};
\node[circle,draw,scale=.5,fill=black] at (0,2.5) (6) {};
\node[circle,draw,scale=.5,fill=black] at (2.5,.5) (9) {};
\node[circle,draw,scale=.5,fill=black] at (3.5,.5) (10) {};
\node[circle,draw,scale=.5,fill=black] at (4.5,.5) (11) {};
\node[circle,draw,scale=.5,fill=black] at (5.5,.5) (12) {};
\node[circle,draw,scale=.5, fill=black] at (2.75,2.25) (8) {};
% \node[circle,draw,scale=.75] at (2,.5) (6) {$6$};
\draw (1)--(2)--(3)--(4)--(5)--(1)--(3)--(5)--(2)--(4)--(1);
\draw (1)--(6);
\draw (2)--(7)--(8);
\draw (3)--(9)--(10)--(11)--(12);
\end{tikzpicture}
% \caption{An example of a generalized spider.}
% \label{fig:example_gen_spid}
\end{figure}
\end{example}

We have that generalized nets $GN_{n,m}$ are generalized spiders $GS_{n,(1^m)}$ for $\svwj{n\ge 3}, n\ge m \ge 0$.

Recall that a \textit{spider} is a tree with exactly one vertex of degree $3$ or greater. Recall also that the \textit{line graph} of a graph $G$ is the graph $L(G)$ with vertices corresponding to the edges of $G$ such that: two vertices of $L(G)$ are adjacent if and only if the corresponding edges of $G$ are incident to the same vertex. It is shown in \cite{spiders_kin} that a graph $G$ is a generalized spider with a body of size $n\ge 3$ if and only if $G$ is the line graph of some spider. \svwj{It is known that all line graphs are claw-free {\cite{Beineke},} and therefore generalized spiders are as well for $n\ge 3$.}

% Generalized spiders and spiders have the following relationship.

% \begin{proposition}[\cite{spiders_kin}]
% \label{prop:gen_spiders_are_line_graphs}
% A graph $G$ is a generalized spider with a body of size $n\ge 3$ if and only if $G$ is the line graph of some spider.
% \end{proposition}

%This can also be observed directly. \svw{Not rigorous enough.} 
%In \cite{spiders_kin}, it is shown that generalized spiders are uniquely determined by their chromatic symmetric function. This theorem is obtained by exploiting the line graph relationship between spiders and generalized spiders. \svw{This doesn't add anything that is relevant later?}

% \begin{theorem}[\cite{spiders_kin}]
% No two generalized spiders have the same chromatic symmetric function.
% \end{theorem}
In this section, we will focus exclusively on the simplest generalized spiders which are neither generalized nets nor paths, that is, the family $GS_{n,(2,1^{m-1})}$ where $n\ge m\ge 1$ and $n\ge 3$. We refer to all vertices outside of the body as \textit{pendants}, the unique pendant nonadjacent to all anchors as the \textit{special pendant}, the other pendants as \textit{regular pendants}, and the unique anchor connected to the path of length $2$ with the special pendant as the \textit{special anchor}. We begin by proving a lemma that mirrors Lemma \ref{lemma:tail_cant_be_pend_filled} but applies to generalized spiders and requires the additional assumption that $\lambda_{k-1}=1$.

\begin{lemma}
\label{lemma:not_pend_filled_gen_spiders}
Consider a partition $\lambda=(\lambda_1,\dots,\lambda_k)$ and assume $\lambda_k=\lambda_{k-1}=1$.  For any $T\in\mathcal{T}_{\lambda, GS_{n,(2,1^{m-1})}}$ where $n\ge m\ge 1$ \svwj{and $n\geq 3$}, we have that, regardless of the choice of labeling, the tail cannot contain only pendants.
\end{lemma}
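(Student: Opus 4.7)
The plan is to proceed by contradiction, adapting the strategy of Lemma~\ref{lemma:tail_cant_be_pend_filled} to account for the single extra non-body vertex contributed by the length-2 path in $GS_{n,(2,1^{m-1})}$. Suppose the tail of $T$ contains only pendants and let $h$ denote their number; since $\lambda_k = \lambda_{k-1} = 1$, the tail contains at least two cells, forcing $h \geq 2$.

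Next I would carefully count total vertices. The graph $GS_{n,(2,1^{m-1})}$ has $n$ body vertices together with $m+1$ non-body vertices (the $m-1$ regular pendants from the length-1 paths, plus the two vertices making up the length-2 path). Hence the head of $T$ contains exactly $n+m+1-h$ cells, and because every head row has length at least $2$, the number of rows in the head is at most $\lfloor(n+m+1-h)/2\rfloor$.

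The crux is to bound the number of rim hooks containing body vertices. Since the body is the complete graph $K_n$, any two body vertices are adjacent, so the $n$ body vertices lie in $n$ distinct rim hooks. By the contradiction hypothesis, each of these rim hooks has its body-vertex cell in the head; a short analysis of the structure of a special rim hook (either its first east step provides a column-$1$ cell in a row of length $\geq 2$, or, if the rim hook is purely vertical in column $1$, the body-vertex cell itself does) shows that each such rim hook contains at least one cell of column $1$ lying in the head. Since distinct rim hooks occupy disjoint column-$1$ cells, this gives
\[
n \;\leq\; \#\{\text{column-1 cells in the head}\} \;\leq\; \left\lfloor \frac{n+m+1-h}{2} \right\rfloor.
\]
With $h \geq 2$ and $m \leq n$, the right-hand side is at most $\lfloor (2n-1)/2 \rfloor = n-1$, yielding the contradiction $n \leq n-1$.

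The main obstacle, and the reason the hypothesis is strengthened from $\lambda_k = 1$ (as in Lemma~\ref{lemma:tail_cant_be_pend_filled}) to $\lambda_k = \lambda_{k-1} = 1$ here, is precisely this arithmetic: the extra vertex supplied by the length-2 path adds $+1$ to the head's cell count, which exhausts the slack available under only $h \geq 1$. Forcing $h \geq 2$ via $\lambda_{k-1} = 1$ restores a strict inequality and closes the argument.
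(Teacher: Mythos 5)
Your proposal is correct and follows essentially the same route as the paper: assume the tail holds only the $h\ge 2$ pendants, count $n+m+1-h$ head cells, bound the number of head rows (equivalently, rim hooks meeting column~1 in the head) by $\lfloor(n+m+1-h)/2\rfloor\le n-1$, and contradict the fact that the $n$ pairwise-adjacent body vertices need $n$ distinct such rim hooks. Your explicit justification that each body-vertex rim hook contributes a column-1 cell inside the head is a detail the paper leaves implicit, but the argument is the same.
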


\begin{proof}
Assume the tail of $T$ has $h$ pendants and no vertices from the body. The head must contain $n+m+1-h$ cells to include all the other vertices. Accordingly, the head may have at most $\lfloor \frac{n+m+1-h}{2} \rfloor$ rows and at most $\lfloor \frac{n+m+1-h}{2} \rfloor$ distinct rim hooks since every rim hook must intersect the first column. We then have
\[
\#\text{vertices from the body in head}\le \# \text{rim hooks in the head} \le \left\lfloor \frac{n+m+1-h}{2} \right\rfloor < n.
\]
The first inequality follows since every \svw{body} vertex \svw{in} the head must be in its own rim hook \svw{by definition.} The last inequality follows since $m\le n$ and $h\ge 2$. Hence, at least one of the $n$ vertices from the body must be in the tail, contradicting the initial assumption.
\end{proof}

We now  \svw{derive} the key recurrence relation of this section which is an analogue to Proposition \ref{prop:rec_for_most_coef}.

\begin{proposition}
\label{prop:spi_rec}
Let $\lambda=(\lambda_1,\dots,\lambda_k)$ be a partition and assume $\lambda_{k}=\lambda_{k-1}=1$. We have
% \begin{align}
% [s_{\lambda}]X_{GN_{n,(2,1^{m-1})}}&=(m-1)[s_{\lambda\setminus 1}]X_{GN_{n-1,(2,1^{m-1})}\cup P_1}+(n-m)[s_{\lambda\setminus 1}]X_{GN_{n-1,(2,1^{m-1})}}\\
% &+[s_{\lambda\setminus 1}]X_{GN_{n,(1^{m-1})}\cup P_2}
% +[s_{\lambda\setminus 1^3}]X_{GN_{n-1,(1^{m-1})}}+m[s_{\lambda\setminus 1^2}]X_{GN_{n-1,(2,1^{m-2})}}.
% \end{align}
\begin{align}
\begin{split}
\label{eq:spi_rec}
\xi (\lambda,GS_{n,(2,1^{m-1})})&=(m-1)\xi(\lambda\setminus 1,GS_{n-1,(2,1^{m-2})}\cup P_1)+(n-m)\xi(\lambda\setminus 1,GS_{n-1,(2,1^{m-1})})\\
&+\xi(\lambda\setminus 1,GS_{n-1,(1^{m-1})\cup P_2})+(m-1)\xi(\lambda\setminus 1^2,GS_{n-1,(2,1^{m-2})})\\
&+\xi(\lambda\setminus 1^3,GS_{n-1,(1^{m-1})})
\end{split}
\end{align}
for $n\ge 3$, $n\ge m\ge 2$.
\end{proposition}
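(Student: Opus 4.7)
The plan is to adapt the approach used in the proof of Proposition~\ref{prop:rec_for_most_coef}, partitioning $\mathcal{T}_{\lambda, GS_{n,(2,1^{m-1})}}$ according to the vertex in the bottom cell of the first column. Let $M$ denote the degree-$2$ vertex lying on the length-$2$ path (adjacent to both the special anchor $A^*$ and the special pendant $P^*$). We use a pendant-first labeling that assigns smaller labels to all vertices outside the body (the $m-1$ regular pendants, $M$, and $P^*$) than to the body vertices (regular anchors, $A^*$, and buoys). The three body-vertex cases yield the first three summands by sign-preserving bijections that remove the bottom cell: since the pendant-first labeling together with the mutual adjacency of body vertices forbids any $N$-step leaving a body vertex at the bottom, removing a regular anchor detaches its pendant and produces $\mathcal{T}_{\lambda\setminus 1,\, GS_{n-1,(2,1^{m-2})}\cup P_1}$ (summed over $m-1$ regular anchors), removing a buoy produces $\mathcal{T}_{\lambda\setminus 1,\, GS_{n-1,(2,1^{m-1})}}$ (summed over $n-m$ buoys), and removing $A^*$ detaches the entire length-$2$ path to produce $\mathcal{T}_{\lambda\setminus 1,\, GS_{n-1,(1^{m-1})}\cup P_2}$.

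The fourth and fifth terms come from two structured pendant configurations. Tabloids with a regular pendant $p$ in the bottom cell directly below its anchor $a$ contribute the fourth term: the adjacency of $p$ and $a$ forces them to occupy distinct length-$1$ rim hooks, and removing the bottom two cells gives a sign-preserving bijection onto $\mathcal{T}_{\lambda\setminus 1^2,\, GS_{n-1,(2,1^{m-2})}}$, summed over the $m-1$ regular pendants. Similarly, tabloids with $P^*$, $M$, $A^*$ in the bottom three cells of the first column (each necessarily in its own length-$1$ rim hook because $P^*$ is adjacent to $M$ and $M$ is adjacent to $A^*$) contribute the fifth term via a sign-preserving bijection onto $\mathcal{T}_{\lambda\setminus 1^3,\, GS_{n-1,(1^{m-1})}}$, since removing these three cells reduces the spider to $GN_{n-1,m-1}=GS_{n-1,(1^{m-1})}$.

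It remains to show the rest of the tabloids cancel in signed sum. For tabloids where a pendant in $\mathcal{P}=\{\text{regular pendants}\}\cup\{P^*\}$ occupies the bottom cell and is nonadjacent to the vertex above, we apply Proposition~\ref{prop:rec_algorithm} with $\mathcal{U}=\{\text{body vertices}\}\cup\{M\}$ whenever $M$ appears in the tail; Conditions (I) and (II) are verified by noting that each element of $\mathcal{P}$ has degree $1$ with a single neighbor in $\mathcal{U}$ (its anchor, or $M$ in the case of $P^*$) and that each element of $\mathcal{U}$ is adjacent to at most one pendant of $\mathcal{P}$, while Conditions (i) and (ii) are enforced by restricting to appropriate fixed-head subsets. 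The main obstacle, absent from the proof of Proposition~\ref{prop:rec_for_most_coef}, will be the cancellation of two further families of tabloids: those with $M$ in the bottom cell (outside the scope of Proposition~\ref{prop:rec_algorithm} since $M$ has degree $2$), and those with $P^*$ at the bottom and $M$ directly above but $M$ not directly below $A^*$. For these we plan to build an additional iterative $N$-step sign-reversing involution, exploiting the fact that $M$ is nonadjacent to all regular anchors, buoys, and regular pendants; verifying that this supplementary involution is well-defined, sign-reversing, and compatible with the earlier cancellations will be the main technical hurdle.
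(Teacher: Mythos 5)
Your derivation of the five terms of the recurrence matches the paper's: the same partition of $\mathcal{T}_{\lambda,GS_{n,(2,1^{m-1})}}$ by the bottom-cell vertex, the same sign-preserving cell-removal bijections for the three body-vertex cases, and the same two structured configurations (a regular pendant directly below its anchor; $P^*$ below $M$ below $A^*$) yielding the last two terms. The gap is in the cancellation of everything else, which is the substance of the proof and which you explicitly defer as ``the main technical hurdle.'' Your plan is to place $M$ into $\mathcal{U}$ whenever it lies in the tail and then invoke Proposition \ref{prop:rec_algorithm}. But Condition (i) of that proposition requires distinct elements of $\mathcal{U}$ to lie in different rim hooks, and this is only automatic when the elements of $\mathcal{U}$ are pairwise adjacent. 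Since $M$ is nonadjacent to every buoy and every non-special anchor and carries a smaller label, there exist tabloids in which $M$ shares a vertical tail rim hook with a body vertex; these violate Condition (i), fall outside the set $\mathcal{S}$ to which Proposition \ref{prop:rec_algorithm} applies, and are not among the two leftover families you list ($M$ in the bottom cell; $P^*$ at the bottom under $M$ with $M$ not under $A^*$). So even your enumeration of what remains to be canceled is incomplete, and the supplementary involution you would need must be reconciled with the Proposition \ref{prop:rec_algorithm} cancellations on overlapping sets of tabloids --- exactly the compatibility problem you flag but do not resolve.

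The paper takes a route that avoids all of this: it never places $M$ (its $p'$) in $\mathcal{U}$. It splits the remaining tabloids according to whether the special pendant lies in the tail. When it does not, Proposition \ref{prop:rec_algorithm} applies with $\mathcal{U}$ equal to the mutually adjacent body vertices in the tail and with $p'$ treated as an ordinary pendant whose anchor is the special anchor. When the special pendant is in the tail, the tabloids in which it does not sit directly below $p'$ cancel by toggling the $N$-step above it (it is nonadjacent to everything but $p'$ and labeled minimally); the rest are mapped, by fusing the special pendant and $p'$ into a single degree-one pendant, into SRH $GN_{n,m}$-tabloids of shape $\lambda\setminus 1$, where Proposition \ref{prop:rec_algorithm} finishes the job. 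Some such reduction that eliminates the degree-two vertex $M$ before applying Proposition \ref{prop:rec_algorithm} appears to be essential; as written, your proposal does not contain one.
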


\begin{proof}
We denote the set of all anchors of $GS_{n,(2,1^{m-1})}$ \svwj{\emph{except} the special anchor} by $A$, the set of all buoys of $GS_{n,(2,1^{m-1})}$ by $B$, and the set of all regular pendants of $GS_{n,(2,1^{m-1})}$ by $P$, the special pendant by $\tilde{p}$, and the special anchor by $\tilde{a}$. We count tabloids with a pendant-first labeling where the special pendant $\tilde{p}$ has the minimum label. \svwj{Note that consequently the pendant adjacent to the special pendant has the second minimal label.} We observe that
\begin{align}
\begin{split}
\label{eq:spi_rec_tab_partition}
\mathcal{T}_{\lambda, GS_{n,(2,1^{m-1})}}=&\mathcal{T}_{\lambda,GS_{n,(2,1^{m-1})}}^A\sqcup \mathcal{T}_{\lambda,GS_{n,(2,1^{m-1})}}^B\sqcup \mathcal{T}_{\lambda,GS_{n,(2,1^{m-1})}}^P\\
&\sqcup \mathcal{T}_{\lambda GS_{n,(2,1^{m-1})}}^{\tilde{p}}\sqcup \mathcal{T}_{\lambda,GS_{n,(2,1^{m-1})}}^{\tilde{a}}.
\end{split}
\end{align}
% First, assume $n-1\ge m$ so there is at least one buoy in $GS_{n,(2,1^{m-1})}$. 

In the case where an anchor $a\in A$, buoy $b\in B$, or the special anchor $\tilde{a}$ is in the bottom row, there is no $N$-step up from the cell since we are using a pendant-first labeling. Hence, we can map these tabloids to tabloids for which the bottom cell is removed. These are sign-preserving bijections
\begin{align}
\begin{split}
\label{eq:spi_rec_biject}
\mathcal{T}_{\lambda,GS_{n,(2,1^{m-1})}}^a&\cong \mathcal{T}_{\lambda\setminus 1 ,GS_{n-1,(2,1^{m-2})}\cup P_1},\qquad\qquad \mathcal{T}_{\lambda,GS_{n,(2,1^{m-1})}}^b\cong \mathcal{T}_{\lambda\setminus 1,GS_{n-1,(2,1^{m-1})}}\\
&\andd\mathcal{T}_{\lambda,GS_{\svwj{n},(2,1^{m-1})}}^{\tilde{a}}\cong \mathcal{T}_{\lambda\setminus 1,GS_{\svwj{n-1},(1^{m-1})\cup P_2}},
\end{split}
\end{align}
since removing an anchor detaches a pendant, decreasing the number of body vertices and the number of pendants by one, removing a buoy just decreases the number of body vertices by one, and removing the special anchor detaches a path of length $2$, also decreasing the number of body vertices by $1$.

From Equation \ref{eq:spi_rec_tab_partition} and Equation \ref{eq:spi_rec_biject}, we then have
\begin{align}
\begin{split}
\label{eq:spi_rec_inc_form}
% [s_{\lambda}]X_{GS_{n,(2,1^{m-1})}}
\xi (\lambda,GS_{n,(2,1^{m-1})})&=(m-1)\cdot\sum_{T\in \mathcal{T}_{\lambda\setminus 1 ,GS_{n-1,(2,1^{m-2})}\cup P_1}}\mathrm{sgn}(T)\\
&+(n-m)\cdot\sum_{T\in \mathcal{T}_{\lambda\setminus 1,GS_{n-1,(2,1^{m-1})}}}\mathrm{sgn}(T)\\
&+\sum_{T\in \mathcal{T}_{\lambda\setminus 1 ,GS_{n-1,(1^{m-1})}\cup P_2}} \svw{\mathrm{sgn}(T)}+\sum_{T\in \mathcal{T}_{\lambda,GS_{n,(2,1^{m-1})}}^P}\mathrm{sgn}(T)+\sum_{T\in \mathcal{T}_{\lambda,GS_{n,(2,1^{m-1})}}^{\tilde{p}}}\mathrm{sgn}(T)\\
% &=m[s_{\lambda\setminus 1}]X_{GN_{n-1,m-1}\cup P_1}+(n-m)[s_{\lambda\setminus 1}]X_{GN_{n-1,m}}+\sum_{T\in \mathcal{T}_{\lambda,GN_{n,m}}^P}\mathrm{sgn}(T)\\
&=(m-1)\xi(\lambda\setminus 1,GS_{n-1,(2,1^{m-2})}\cup P_1)+(n-m)\xi(\lambda\setminus 1,GS_{n-1,(2,1^{m-1})})\\
&+\xi(\lambda\setminus 1,GS_{n-1,(1^{m-1})}\cup P_2)+\sum_{T\in \mathcal{T}_{\lambda,GS_{n,(2,1^{m-1})}}^P}\mathrm{sgn}(T)\\
&+\sum_{T\in \mathcal{T}_{\lambda,GS_{n,(2,1^{m-1})}}^{\tilde{p}}}\mathrm{sgn}(T)
\end{split}
\end{align}
since there are $m-1$ anchors, $n-m$ buoys, and $1$ special anchor $\tilde{a}$ in the graph.

Consider the subset of tabloids $\mathcal{S}^p$ in  $\mathcal{T}^P_{\lambda, GS_{n,(2,1^{m-1})}}$ such that a certain pendant $p\in P$ is in the bottom cell, directly below its anchor. Since there are no $N$-steps up from the anchor, we map these tabloids to tabloids where the bottom two cells are removed. We can accordingly obtain sign-preserving bijections 
% for each pendant $p$. These bijections each map from 
\[
\mathcal{S}^p\cong \mathcal{T}_{\lambda\setminus 1^2,GS_{n-1,(2,1^{m-2})}},
\]
for each pendant $p$, since removing a pendant and corresponding anchor lowers the number of pendants and the number of body vertices by one.

We can thus count the signs of tabloids in each $\mathcal{S}^p$ by adding the term
\begin{align*}
(m-1)\sum_{T\in\mathcal{T}_{\lambda\setminus 1^2,GS_{n-1,(2,1^{m-2})}}}\mathrm{sgn}(T)=(m-1)\xi(\lambda\setminus 1^2,GS_{n-1,(2,1^{m-2})})
% (m-1)\sum_{T\in\mathcal{T}_{\lambda\setminus 1^2,GS_{n-1,(2,1^{m-2})}}}\mathrm{sgn}(T)&=(m-1)[s_{\lambda\setminus 1^2}]X_{GS_{n-1,(2,1^{m-2})}}\\
% &=(m-1)\xi(\lambda\setminus 1^2,GS_{n-1,(2,1^{m-2})})
\end{align*}
to the sum we are computing (we have a factor of $m-1$ since there are $m-1$ pendants $p$ in $P$).

Now, consider the subset $\mathcal{S}^{\tilde{p}}$of tabloids in  $\mathcal{T}^{\tilde{p}}_{\lambda, GS_{n,(2,1^{m-1})}}$ such that the special pendant $\tilde{p}$ is in the bottom cell, directly below its unique adjacent pendant $p'$, which is below the special anchor $\tilde{a}$. Since there are no $N$-steps up from any of these three vertices, we map these tabloids to tabloids where the bottom three cells are removed. We can accordingly obtain a sign-preserving bijection 
% from from the $\mathcal{S}^{\tilde{p}}$ to
\[
% \mathcal{T}^{\tilde{p}}_{\lambda, GS_{n,(2,1^{m-1})}}\qquad\text{to}\qquad
\mathcal{S}^{\tilde{p}}\cong\mathcal{T}_{\lambda\setminus 1^3,GS_{n-1,(1^{m-1})}},
\]
since removing the special pendant $\tilde{p}$, its adjacent pendant $p'$, and the special anchor $\tilde{a}$ removes the path of length $2$ containing $\tilde{p}$ and $p'$ and reduces the number of body vertices by one.

We can thus count the signs of the tabloids in $\mathcal{S}^{\tilde{p}}$ by adding the term
\begin{align*}
% \sum_{T\in\mathcal{T}_{\lambda\setminus 1^3,GS_{n-1,(1^{m-1})}}}\mathrm{sgn}(T)=[s_{\lambda\setminus 1^3}]X_{GS_{n-1,(1^{m-1})}}=\xi(\lambda\setminus 1^3,GS_{n-1,(1^{m-1})}).
\sum_{T\in\mathcal{T}_{\lambda\setminus 1^3,GS_{n-1,(1^{m-1})}}}\mathrm{sgn}(T)=\xi(\lambda\setminus 1^3,GS_{n-1,(1^{m-1})}).
\end{align*}

Let 
\[
    \mathcal{S}= \Bigg(\mathcal{T}^{P}_{\lambda, GS_{n,(2,1^{m-1})}}\setminus \bigsqcup_{p \in P}\mathcal{S}^p\Bigg)
    \bigsqcup \Bigg(\mathcal{T}^{\tilde{p}}_{\lambda, GS_{n,(2,1^{m-1})}} \setminus \mathcal{S}^{\tilde{p}}\Bigg)
\]
denote the set of all tabloids which remain to be counted.

We partition $\mathcal{S}$ as
\[
\mathcal{S}=\mathcal{S}^1\sqcup \mathcal{S}^2
\]
where $\mathcal{S}^1$ includes all tabloids $T$ in $\mathcal{S}$ such that the special pendant $\tilde{p}$ does not appear in $\text{tl}(T)$ and $\mathcal{S}^2=\mathcal{S}\setminus \mathcal{S}^1$. We then partition $\mathcal{S}^1$ such that
\[
\mathcal{S}^1=\bigsqcup_{H}\mathcal{S}^1_H,
\]
where the disjoint union spans over all possible heads of tabloids $T\in \mathcal{S}^1$ and each $\mathcal{S}^1_H\subseteq \mathcal{S}^1$ is exactly the subset such that
\[
\mathrm{hd}(T)=\mathrm{hd}(T')=H
\]
for all $T,T'\in\mathcal{S}^1_H$.

For any $\mathcal{S}^1_H$, let $\mathcal{P}_H$ and $\mathcal{U}_H$ respectively denote the set of pendants and the set of body vertices appearing in the tail of every $T\in\mathcal{S}^1_H$. By Lemma \ref{lemma:not_pend_filled_gen_spiders}, the \svwj{premise} that the special pendant $\tilde{p}$ does not appear in the tail of any tabloid in $\mathcal{S}^1$, and the definition of a generalized spider, Conditions (I) and (II) of Proposition \ref{prop:rec_algorithm} are satisfied for $\mathcal{P}_H$ and $\mathcal{U}_H$.

Moreover, all $u,u'\in U$ are all adjacent so Condition (i) of Proposition \ref{prop:rec_algorithm} is also satisfied. Lastly, recall $\mathcal{S}^1$ does not include any tabloids with a pendant in the bottom cell directly below its anchor. Therefore, Condition (ii) of Proposition \ref{prop:rec_algorithm} holds as well. We conclude
\[
\sum_{T\in\mathcal{S}^1_H}\mathrm{sgn}(T)=0
\qquad\text{for all $H$ so}\qquad
\sum_{T\in\mathcal{S}^1}\mathrm{sgn}(T)=0.
\]

Now, consider the subset of all tabloids in $\mathcal{S}^2$ such that the special pendant $\tilde{p}$ appears in the tail below some vertex other than its adjacent pendant $p'$. Since $\tilde{p}$ is nonadjacent to all other vertices and labeled minimally, we can define a sign-reversing involution on these tabloids by adding an $N$-step up from $\tilde{p}$ if it is not there and removing an $N$-step up from $\tilde{p}$ if it is there. Thus, the signs of all these tabloids cancel out.

Every remaining tabloid in $\mathcal{S}^2$ satisfies the property that $\tilde{p}$ appears below its adjacent pendant $p'$ in $\text{tl}(T)$. Since there are no $N$-steps up from $\tilde{p}$ in this case, we map these tabloids to the tabloids where $p'$ and $\tilde{p}$ have been combined into one vertex $\tilde{p}'$ (and their cells in the tabloid have been combined). We can accordingly obtain a sign-preserving injection from this subset of $\mathcal{S}^2$ into
\[
% \mathcal{T}^{\tilde{p}}_{\lambda, GS_{n,(2,1^{m-1})}}\qquad\text{to}\qquad
\mathcal{T}_{\lambda\setminus 1,GS_{n,(1^{m})}}=\mathcal{T}_{\lambda\setminus 1,GN_{n,m}},
\]
since this map removes the path of length $2$ with the special pendant, replacing it with a regular pendant, and thus obtaining a generalized net.

The image of this map includes exactly all SRH $GN_{n,m}$-tabloids $T$ of shape $\lambda\setminus 1$ such that the bottom cell in $T$ is filled by a pendant which is nonadjacent to the vertex above it. This is because all the tabloids with a regular pendant in the bottom cell below its anchor have been counted as the sets $\mathcal{S}^p$. Likewise, the SRH $GN_{n,m}$-tabloids which have $\tilde{p}'$ in the bottom cell below its anchor correspond to SRH $GS_{n,(2,1^{m-1})}$-tabloids with $\tilde{p}$ in the bottom cell below $p'$ \svwj{(with label 2)} below the special anchor $\tilde{a}$. These tabloids have also been counted as the set $\mathcal{S}^{\tilde{p}}$.

As described in the proof of Proposition \ref{prop:rec_for_most_coef}, we can now apply Proposition \ref{prop:rec_algorithm} to conclude
\[
\sum_{T\in \mathcal{S}^2}\text{sgn}(T)=0,
\]
and thus that Equation \ref{eq:spi_rec} holds.\end{proof}

Proving Schur-positivity for the family of claw-free graphs $GS_{n,(2,1^{m-1})}$ would be a natural extension of Theorem \ref{thm:gen_nets_are_s_pos} since this family contains graphs which are very similar to generalized nets and are still conjectured to be Schur-positive by Conjecture \ref{conj:claw_free}. However, unlike Proposition \ref{prop:rec_for_most_coef}, Proposition \ref{prop:spi_rec} requires the additional assumption that the given partition has a tail of length at least $2$ (or, equivalently, $\lambda_{k-1}=1$). This is an obstacle that necessitates other arguments for showing the nonnegativity of the coefficients 
\[
[s_{(3,2^{n-1})}]X_{GS_{n,(2,1^{n-1})}},\qquad [s_{(2^{n},1)}]X_{GS_{n,(2,1^{n-1})}},\andd [s_{(2^{n})}]X_{GS_{n,(2,1^{n-2})}}.
\]
Note that these are exactly the appearing nonzero coefficients which are not covered by Lemma \ref{lemma:not_pend_filled_gen_spiders}. If these three coefficients are always nonnegative, then all generalized spiders $GS_{n,\svwj{(2,1^{m-1})}}$ are Schur-positive.

%\bibliographystyle{siam}
 %\bibliography{biblio}

\end{document}